\algrenewcommand\algorithmicwhile{\textbf{While}}
\algrenewcommand\algorithmicfor{\textbf{For}}
\algrenewcommand\algorithmicdo{\textbf{Do}}
\algrenewcommand\algorithmicif{\textbf{If}}
\algrenewcommand\algorithmicthen{\textbf{Then}}
\algrenewcommand\algorithmicelse{\textbf{Else}}
\algrenewcommand\algorithmicend{\textbf{End}}
\algrenewcommand\algorithmicreturn{\textbf{Return}}
\DeclareMathOperator{\quo}{quo}
\DeclareMathOperator{\rem}{rem}
\DeclareMathOperator{\lquo}{lquo}
\DeclareMathOperator{\lrem}{lrem}
\DeclareMathOperator{\rquo}{rquo}
\DeclareMathOperator{\rrem}{rrem}
\newcommand{\ci}{\mathrm{i}}
\newcommand{\qi}{\mathbf{i}}
\newcommand{\qj}{\mathbf{j}}
\newcommand{\qk}{\mathbf{k}}
\newcommand{\Cj}[1]{{#1}^\ast}
\newcommand{\SPQ}{\mathbb{S}}
\newcommand{\R}{\mathbb{R}}
\newcommand{\C}{\mathbb{C}}
\renewcommand{\H}{\mathbb{H}}
\renewcommand{\DH}{\mathbb{DH}}
\newcommand{\eps}{\varepsilon}
\newcommand{\SO}[1][3]{\mathrm{SO}(#1)}
\newcommand{\SE}[1][3]{\mathrm{SE}(#1)}
\newcommand{\N}{\mathcal{N}}
\newcommand{\NC}{\mathfrak{N}}
\renewcommand{\P}{\mathbb{P}}
\DeclareMathOperator{\RE}{Re}
\DeclareMathOperator{\IM}{Im}
\newcommand{\Scalar}[1]{\RE(#1)}
\newcommand{\Vector}[1]{\IM(#1)}
\newtheorem{thm}{Theorem}[section]
\newtheorem{lem}[thm]{Lemma}
\newtheorem{cor}[thm]{Corollary}
\theoremstyle{definition}
\newtheorem{defn}[thm]{Definition}
\theoremstyle{remark}
\newtheorem{rmk}[thm]{Remark}
\newtheorem{example}[thm]{Example}
\title{An Algorithm for the Factorization of Split Quaternion Polynomials}
\date{\today}
\author{Daniel F. Scharler \and Hans-Peter Schröcker}
\address{Department of Basic Sciences in Engineering Sciences, University of Innsbruck, Technikerstr.~13, 6020 Innsbruck, Austria}
\email{daniel.scharler@uibk.ac.at}
\email{hans-peter.schroecker@uibk.ac.at}
\keywords{skew polynomial ring, null quadric, Clifford translation, left/right
  ruling, zero divisor, hyperbolic kinematics}
\subjclass[2010]{
  12D05, 16S36, 51M09, 51M10, 70B10  }
\begin{document}

\begin{abstract}
  We present an algorithm to compute all factorizations into linear factors of
  univariate polynomials over the split quaternions, provided such a
  factorization exists. Failure of the algorithm is equivalent to
  non-factorizability for which we present also geometric interpretations in
  terms of rulings on the quadric of non-invertible split quaternions. However,
  suitable real polynomial multiples of split quaternion polynomials can still
  be factorized and we describe how to find these real polynomials. Split
  quaternion polynomials describe rational motions in the hyperbolic plane.
  Factorization with linear factors corresponds to the decomposition of the
  rational motion into hyperbolic rotations. Since multiplication with a real
  polynomial does not change the motion, this decomposition is always possible.
  Some of our ideas can be transferred to the factorization theory of motion
  polynomials. These are polynomials over the dual quaternions with real norm
  polynomial and they describe rational motions in Euclidean kinematics. We
  transfer techniques developed for split quaternions to compute new
  factorizations of certain dual quaternion polynomials.
\end{abstract}

\maketitle

\section{Introduction}

In kinematics, robotics and mechanism science Hamiltonian quaternions and dual
quaternions have been employed to parametrize the group of Euclidean
displacements $\SE[3]$ as well as its subgroups $\SO$ and $\SE[2]$. The rich
algebraic structure of the quaternion models allow to investigate certain
problems from an algebraic point of view. Rational motions can be represented by
polynomials over the ring of quaternions or dual quaternions. In this context,
factorization of a polynomial into polynomials of lower degree corresponds to
the decomposition of a rational motion into ``simpler'' motions. One of the
simplest non-trivial motions are rotations. They can be represented by linear
polynomials. On the other hand, linear polynomials generically represent
rotational motions. Hence, a motion described by a polynomial that admits a
factorization into linear factors can be realized by a mechanism whose revolute
joints correspond to the linear factors.

A suitable model for motions in the hyperbolic plane is provided by the
non-commutative ring of split quaternions \cite[Chapter~8]{alkhaldi20}. In
contrast to the (Hamiltonian) quaternions, the presence of zero divisors makes
the factorization theory of polynomials over split quaternions more complex.
Factorization of quadratic split quaternion polynomials has been investigated in
\cite{abrate09} and \cite{cao19}. Zeros of split quaternion polynomials of
higher degree (which are closely related to linear factors, c.f.
Lemma~\ref{lem:zero-factor}) are the topic of \cite{falcao19}. Based on the
theory of motion factorization by means of quaternions \cite{niven41,gordon65}
and dual quaternions \cite{hegedus13,li15b} a characterization of
factorizability for such polynomials has been found \cite{scharler19a}. In order
to compute factorizations, the algorithm of the Euclidean setup has been adapted
\cite{scharler19a, li18d}.

In this article, we consider rational motions in the hyperbolic plane,
represented by polynomials over the split quaternions. We extend the results
from the quadratic case \cite{scharler19a} to polynomials of arbitrary degree. A
main ingredient is a geometric interpretation of factorizability. We investigate
the ``geometry'' of the factorization algorithm for generic cases and modify it
such that it provably finds all factorizations into linear factors. Special
cases include polynomials with infinitely many factorizations or no
factorizations at all.

In case a split quaternion polynomial representing a rational motion has no
factorization with linear factors we can adopt a ``degree elevation technique''
from the Euclidean setup \cite{li15b}: Multiplying with a suitable real
polynomial does not change the underlying rational motion but allows the
decomposition into linear factors. Hence, the initial motion can be decomposed
into hyperbolic rotations. In contrast to the Euclidean case, our approach is
rather based on geometric considerations than algebraic ones.

It is worth mentioning that the set of split quaternions is isomorphic to the
fundamental algebra of real $2 \times 2$ matrices and all of our results can be
directly transferred.

The further structure of this article is as follows: After a brief definition of
split quaternions and hyperbolic motions in Section~\ref{sec:preliminaries} we
recall the state of art in factorization theory in
Section~\ref{sec:state-of-the-art}. In Section~\ref{sec:factor-non-generic} we
introduce a new algorithm for factorization of split quaternion polynomials. In
Section~\ref{sec:geometry-of-algorithm} we investigate the geometry of the
algorithm and present a degree-elevation technique for dealing with
non-factorizable polynomials as well. Finally, we apply some of our ideas to
Euclidean motions in Section~\ref{sec:euclidean-factorization} and compute new
factorization for dual quaternion polynomials.

\section{Preliminaries}
\label{sec:preliminaries}

\subsection{Split Quaternions and their Geometry}
\label{sec:split-quaternions-and-geometry}

The algebra of split quaternions $\SPQ$ is defined as the set
\begin{align*}
  \{ h = h_0 + h_1 \qi + h_2 \qj + h_3 \qk \colon h_0, h_1, h_2, h_3 \in \R \}.
\end{align*}
Addition of split quaternions is done component-wise and the non-commutative
multiplication is obtained from the relations
\begin{align*}
  \qi^2 = - \qj^2 = - \qk^2 = -\qi \qj \qk = -1.
\end{align*}
The \emph{conjugate} split quaternion of $h = h_0 + h_1 \qi + h_2 \qj + h_3 \qk$
is defined by $\Cj{h} \coloneqq h_0 - h_1 \qi - h_2 \qj - h_3 \qk$ and the
expression
\begin{align*}
  h \Cj{h} = \Cj{h} h = h_0^2 + h_1^2 - h_2^2 - h_3^2 \in \R
\end{align*}
is sometimes called \emph{norm} in the context of quaternions, even if it can
attain negative values and lacks the square root. The center of $\SPQ$ is
precisely the field of real numbers $\R$, i.e. $hr = rh$ for all $r \in \R$.
Later on we will also allow complex coefficients $h_0$, $h_1$, $h_2$, $h_3 \in
\C$ and multiply by complex numbers $z \in \C$ that commute with split
quaternions as well. For the complex unit we will use the (non-boldface) symbol
$\ci \in \C$ with the property $\ci^2=-1$. Conjugation of complex numbers is
denoted by $\overline{z} \in \C$. Let $g \in \SPQ$ be another split quaternion,
then $\Cj{(hg)} = \Cj{g}\Cj{h}$ and $(hg) \Cj{(hg)} = h\Cj{h} g\Cj{g}$. The
inverse of $h$ is $h^{-1} = (h \Cj{h})^{-1} \Cj{h}$. It exists if and only if
$h\Cj{h} \neq 0$. We call $\Scalar{h} \coloneqq \frac{1}{2}(h + \Cj{h}) = h_0$
the \emph{scalar part} and $\Vector{h} \coloneqq \frac{1}{2}(h - \Cj{h}) =
h_1\qi + h_2\qj + h_3\qk$ the \emph{vector part} of $h \in \SPQ$. If $\Scalar{h}
= 0$ then $h$ is called \emph{vectorial}. The \emph{scalar product} and the
\emph{cross product} of $h$ and $g$ are defined by
\begin{align*}
  \langle h,g \rangle \coloneqq \frac{1}{2}(h\Cj{g} + g\Cj{h}) \quad \text{and} \quad h \times g \coloneqq \frac{1}{2}(hg - gh),
\end{align*}
respectively. Note that the cross product is not affected by the scalar parts,
i.e. $h \times g = \Vector{h} \times \Vector{g}$. The real four-dimensional
vector space of split quaternions together with the symmetric bilinear form
\begin{equation*}
  \langle \cdot, \cdot \rangle \colon \SPQ \times \SPQ \to \R \colon (h,g) \mapsto \langle h,g \rangle
\end{equation*}
is a \emph{pseudo-Euclidean space}. The set
\begin{align*}
  \NC \coloneqq \{ h \in \SPQ \colon \langle h,h \rangle = 0 \}
\end{align*}
is called the \emph{null cone}. It consists precisely of all zero divisors in
$\SPQ$. We will also consider the projective space $\P(\SPQ)$ over $\SPQ$. An
element of $\P(\SPQ)$ is denoted by $[h]$ where $h \in \SPQ \setminus \{ 0 \}$
is a non-zero split quaternion. Let $[g],[h] \in \P(\SPQ)$ be two elements. The
projective span of $[h]$ and $[g]$ is denoted by
\begin{align*}
  [h] \vee [g] \coloneqq \{ [\lambda h + \mu g] \in \P(\SPQ) \colon [\lambda, \mu]^\intercal \in \P(\R) \}. 
\end{align*}
It is a straight line in general and only a point if $[h] = [g]$, i.e. $h$ and
$g$ are linearly dependent in the vector space~$\SPQ$.

\begin{defn}[\cite{scharler19a}]
  \label{def:null_quadric}
  The quadric $\N$ in $\P(\SPQ)$ represented by the symmetric bilinear form
  $\langle \cdot, \cdot \rangle$ is called the \emph{null quadric}. Lines
  contained in $\N$ are called \emph{null lines} or \emph{rulings} of $\N$.
\end{defn}

The null quadric $\N$ is of hyperbolic type because the signature of $\langle
\cdot,\cdot \rangle$ is $(2,2)$. Moreover, $\N$ carries two families of lines.
In the following we recall some basic results on null lines and the null quadric
$\N$. For the proofs we refer to \cite{li18a} and~\cite{scharler19a}.

\begin{thm}[\cite{scharler19a}]
  \label{th:left-right-ruling}
  If $[h]$ is a point of $\N$, the sets
  \begin{equation*}
    \mathcal{L} \coloneqq \{ [g] \in \P(\SPQ) \colon g\Cj{h} = 0 \}
    \quad\text{and}\quad
    \mathcal{R} \coloneqq \{ [g] \in \P(\SPQ) \colon \Cj{h}g = 0 \}
  \end{equation*}
  are the two different rulings of $\N$ through~$[h]$.
\end{thm}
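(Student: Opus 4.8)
The plan is to establish three things about the point $[h]\in\N$: (i) each of $\mathcal{L}$, $\mathcal{R}$ is a projective line, (ii) both are contained in $\N$ and pass through $[h]$, and (iii) they are distinct. Since $\N$ is a non-degenerate quadric of hyperbolic type, exactly two of its rulings pass through any of its points; so two distinct null lines $\mathcal{L},\mathcal{R}\ni[h]$ are forced to be precisely those two rulings. Throughout fix $h\in\SPQ\setminus\{0\}$ with $h\Cj{h}=\Cj{h}h=\langle h,h\rangle=0$; then $\Cj{h}\neq0$ as well, and trivially $[h]\in\mathcal{L}\cap\mathcal{R}$ because $h\Cj{h}=\Cj{h}h=0$.

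For $\mathcal{L}$ I would study the $\R$-linear map $\rho\colon\SPQ\to\SPQ$, $g\mapsto g\Cj{h}$, so that $\mathcal{L}=\P(\ker\rho)$; note $\ker\rho$ contains the left ideal $\SPQ\,h$ since $(ph)\Cj{h}=p(h\Cj{h})=0$. The decisive observation is that \emph{both} $\ker\rho$ and $\operatorname{im}\rho$ lie in the null cone $\NC$: if $g\Cj{h}=0$, left-multiplying by $\Cj{g}$ gives $(g\Cj{g})\Cj{h}=(\Cj{g}g)\Cj{h}=0$, and since $g\Cj{g}\in\R$ is central while $\Cj{h}\neq0$ this forces $g\Cj{g}=0$, i.e.\ $g\in\NC$; on the other hand $(g\Cj{h})\Cj{(g\Cj{h})}=g\Cj{h}\,h\,\Cj{g}=g(\Cj{h}h)\Cj{g}=0$, so $\operatorname{im}\rho\subseteq\NC$ too. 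Because $\langle\cdot,\cdot\rangle$ has signature $(2,2)$, a linear subspace of $\SPQ$ contained in $\NC$ has dimension at most $2$; applying this to $\ker\rho$ gives $\dim\ker\rho\le2$, and applying it to $\operatorname{im}\rho$ gives $\operatorname{rank}\rho\le2$, hence $\dim\ker\rho\ge2$. So $\dim_\R\ker\rho=2$ and $\mathcal{L}$ is a projective line inside $\N$ through $[h]$. The mirror-image argument for $g\mapsto\Cj{h}g$ (right-multiply the kernel equation by $\Cj{g}$, and use $(\Cj{h}g)\Cj{(\Cj{h}g)}=\Cj{h}(g\Cj{g})h=(g\Cj{g})(\Cj{h}h)=0$) shows $\mathcal{R}$ is likewise a projective line through $[h]$ inside $\N$.

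It remains to see $\mathcal{L}\neq\mathcal{R}$. Here I would note that $V\coloneqq\ker\rho=\{g:g\Cj{h}=0\}$ is a left ideal and $W\coloneqq\{g:\Cj{h}g=0\}$ a right ideal of $\SPQ$. Were $\mathcal{L}=\mathcal{R}$, then $V=W$, so this two-dimensional subspace would be a two-sided ideal of $\SPQ$; but $\SPQ$ is isomorphic to the algebra of real $2\times2$ matrices, which is simple and has no two-sided ideal besides $\{0\}$ and the whole algebra — a contradiction. Invoking that through every point of the hyperbolic quadric $\N$ pass exactly two of its rulings then finishes the argument.

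I expect the dimension count for $\ker\rho$ to be the crux: the clean trick is that right-multiplication by the zero divisor $\Cj{h}$ has both its kernel and its image trapped in $\NC$, and the $(2,2)$ signature caps linear subspaces there at dimension $2$, pinning $\dim\ker\rho$ from both sides at once. If any step got unwieldy I would fall back on the matrix model $\SPQ\cong\R^{2\times2}$, where $[h]$ is a rank-one matrix and $\mathcal{L},\mathcal{R}$ are visibly the pencils of matrices sharing its kernel, respectively its image; the distinctness is then the obvious statement that prescribing a kernel line is not the same as prescribing an image line.
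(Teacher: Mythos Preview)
Your argument is correct. Note, however, that the paper does not actually give its own proof of this theorem: the sentence immediately preceding the statement reads ``For the proofs we refer to \cite{li18a} and \cite{scharler19a},'' and indeed the theorem is quoted from \cite{scharler19a}. There is thus no in-paper proof to compare against.

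That said, your route is clean and worth recording. The key device---observing that for the linear map $\rho\colon g\mapsto g\Cj{h}$ \emph{both} $\ker\rho$ and $\operatorname{im}\rho$ are totally isotropic, so the signature-$(2,2)$ bound caps each at dimension~$2$ while rank--nullity forces their dimensions to sum to~$4$---pins $\dim\ker\rho=2$ with no coordinate computation. The distinctness argument via simplicity of $\SPQ\cong\R^{2\times2}$ (a nonzero proper two-sided ideal cannot exist) is equally slick. The only mild reliance on background is the standard fact that a smooth quadric of hyperbolic type in $\P^3$ carries exactly two rulings through each of its points; this is implicit in the paper's discussion after Definition~\ref{def:null_quadric}, so it is fair game.
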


\begin{defn}
  \label{def:left-right-ruling}
  Consider a ruling $L \subset \N$. If $g\Cj{h} = 0$ for all $[h]$, $[g] \in L$,
  then $L$ is called a \emph{left ruling}. If $\Cj{h}g = 0$ for all $[h]$, $[g]
  \in L$, then $L$ is called a \emph{right ruling}.
\end{defn}

\begin{cor}[\cite{scharler19a}]
  \label{cor:left-right-ruling}
  Consider two points $[h] \in \N$ and $[g] \in \P(\SPQ)$.
  \begin{itemize}
  \item If $g$ is such that $gh \neq 0$ and $[gh] \neq [h]$, then $[h] \vee
    [gh]$ is a left ruling of~$\N$.
  \item If $g$ is such that $hg \neq 0$ and $[hg] \neq [h]$, then $[h] \vee
    [hg]$ is a right ruling of~$\N$.
  \end{itemize}
\end{cor}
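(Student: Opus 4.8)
The plan is to reduce both items to Theorem~\ref{th:left-right-ruling} by checking that the relevant product lies on the line $\mathcal{L}$ (respectively $\mathcal{R}$) determined by $[h]$. Consider the first item, where $gh\neq 0$ and $[gh]\neq[h]$. First I would observe that since $[h]\in\N$ we have $h\Cj{h}=0$, hence $(gh)\Cj{h}=g(h\Cj{h})=0$, so $[gh]$ lies in the set $\mathcal{L}=\{[g']\colon g'\Cj{h}=0\}$ from Theorem~\ref{th:left-right-ruling}. Clearly $[h]$ itself also lies in $\mathcal{L}$, because $h\Cj{h}=0$. Since $\mathcal{L}$ is a ruling of $\N$ (in particular a line) and it contains the two distinct points $[h]$ and $[gh]$, it must equal the projective span $[h]\vee[gh]$. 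Thus $[h]\vee[gh]$ is a ruling of $\N$, and by Definition~\ref{def:left-right-ruling} it is a left ruling, since $g'\Cj{h''}=0$ holds for the spanning points and hence, by bilinearity, for all pairs of points on the line. The second item is entirely symmetric: from $\Cj{h}h=0$ we get $\Cj{h}(hg)=(\Cj{h}h)g=0$, so both $[h]$ and $[hg]$ lie in $\mathcal{R}=\{[g']\colon\Cj{h}g'=0\}$, which is the other ruling through $[h]$; as it contains the two distinct points $[h]$ and $[hg]$ it coincides with $[h]\vee[hg]$, a right ruling.

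The one point that needs a little care is the passage from ``the spanning points annihilate $\Cj{h}$'' to ``every point of the line is a left ruling point in the sense of Definition~\ref{def:left-right-ruling},'' i.e.\ that $g_1\Cj{g_2}=0$ for \emph{all} $[g_1],[g_2]$ on the line, not just when one of them is $[h]$. This is where I would invoke Theorem~\ref{th:left-right-ruling} directly rather than argue from scratch: the theorem already asserts that $\mathcal{L}$ \emph{is} a ruling of $\N$, so once we know $[h]\vee[gh]=\mathcal{L}$ we may simply quote that $\mathcal{L}$ is a ruling, and the left/right dichotomy of Definition~\ref{def:left-right-ruling} is the content of the theorem's phrasing. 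The genuine hypotheses $gh\neq 0$ and $[gh]\neq[h]$ are exactly what is needed to guarantee that $[gh]$ is a well-defined point of $\P(\SPQ)$ distinct from $[h]$, so that the two points actually span a line rather than collapsing to a point.

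I expect the main (minor) obstacle to be bookkeeping with the zero-divisor conditions: one must make sure $[gh]$ genuinely lies on $\N$, which follows from $(gh)\Cj{(gh)} = g h \Cj{h}\Cj{g} = g\cdot 0\cdot\Cj{g}=0$ using the multiplicativity $(hg)\Cj{(hg)}=h\Cj{h}\,g\Cj{g}$ recalled in the preliminaries, together with $h\Cj{h}=0$. With that in hand the argument is a two-line computation plus an appeal to Theorem~\ref{th:left-right-ruling}, so no deeper machinery is required.
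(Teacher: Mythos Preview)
Your argument is correct: from $h\Cj{h}=0$ you get $(gh)\Cj{h}=0$, so $[gh]\in\mathcal{L}$; together with $[h]\in\mathcal{L}$ and $[gh]\neq[h]$ this forces $[h]\vee[gh]=\mathcal{L}$, which Theorem~\ref{th:left-right-ruling} already identifies as a ruling of $\N$, and Definition~\ref{def:left-right-ruling} classifies it as a left ruling. The symmetric computation handles the second item.

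Note that the paper does not actually supply its own proof of this corollary; it is quoted from \cite{scharler19a} with the remark ``For the proofs we refer to \cite{li18a} and~\cite{scharler19a}.'' So there is nothing to compare your approach against in this paper. That said, your derivation is exactly the natural one-line reduction to Theorem~\ref{th:left-right-ruling} that one would expect, and the only cosmetic point is that your extra paragraph about checking $g_1\Cj{g_2}=0$ for \emph{all} pairs on the line is unnecessary: once you have established $[h]\vee[gh]=\mathcal{L}$, the fact that $\mathcal{L}$ is a left ruling in the sense of Definition~\ref{def:left-right-ruling} is part of what Theorem~\ref{th:left-right-ruling} asserts (the two rulings through $[h]$ are precisely $\mathcal{L}$ and $\mathcal{R}$, and Definition~\ref{def:left-right-ruling} names them).
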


\begin{rmk}\label{rmk:clifford-translation}
  For fixed $g \in \SPQ \setminus \NC$ the maps $[x] \mapsto [gx]$ and $[x]
  \mapsto [xg]$ are the well-known Clifford left and right translations of
  non-Euclidean geometry (c.\,f.~\cite{scharler19a}). If $g \in \NC \setminus
  \{0\}$, these maps are still defined on $\P(\SPQ)$ minus the subspaces of
  left/right annihilators of $g$. We refer to them as \emph{singular} Clifford
  left or right translations. Their image is the left/right ruling through
  $[g]$, respectively.
\end{rmk}

\begin{thm}[\cite{scharler19a}]
  \label{th:affine-two-plane-of-zeros}
  Consider two split quaternions $h = h_0 + h_1 \qi + h_2 \qj + h_3 \qk \in \SPQ
  \setminus \{ 0 \}$ and $g = g_0 + g_1 \qi + g_2 \qj + g_3 \qk \in \SPQ
  \setminus \{ 0 \}$ such that $[h] \vee [g]$ is a left ruling of $\N$. There
  exists an affine two-plane consisting of all split quaternions $x \in \SPQ$
  solving the equation $g = xh$. It can be parameterized by $u + \lambda \Cj{h}
  + \mu \qi \Cj{h}$, where $u = (g_0 + g_1 \qi) (h_0 + h_1 \qi)^{-1}$ and
  $\lambda$, $\mu \in \R$. (If $[h] \vee [g]$ is a right ruling, the same
  statement holds for $g = hx$ with $u + \lambda \Cj{h} + \mu \Cj{h} \qi$ where
  $u = (h_0 + h_1 \qi)^{-1} (g_0 + g_1 \qi)$ and $\lambda$, $\mu \in \R$. It
  even holds true if $[h]$ and $[g]$ coincide and therefore do not span a
  straight line.)
\end{thm}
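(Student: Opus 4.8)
The plan is to convert the equation $g = xh$ into a $\C$-linear system and read the affine plane off it. Since the line $[h]\vee[g]$ lies on $\N$, its point $[h]$ lies on $\N$, so $h\Cj h = 0$; together with $\Cj h\neq 0$ and $\Cj h h = h\Cj h$, this shows that right multiplication by $h$ on $\SPQ$ is $\R$-linear with a nontrivial kernel $K \coloneqq \{x\in\SPQ : xh = 0\}$. To make this explicit I would decompose $\SPQ = \C \oplus \C\qj$ with $\C = \R + \R\qi$, using $\qj^2 = 1$ and $\qj z = \bar z\qj$ for $z\in\C$. Writing $h = a + b\qj$, $g = c + d\qj$ and $x = p + q\qj$ with $a = h_0 + h_1\qi$, $b = h_2 + h_3\qi$ and so on, a short computation gives $xh = (pa + q\bar b) + (pb + q\bar a)\qj$, so that $g = xh$ is equivalent to the $\C$-linear system
\[
  \begin{pmatrix} a & \bar b\\ b & \bar a\end{pmatrix}\begin{pmatrix} p\\ q\end{pmatrix} = \begin{pmatrix} c\\ d\end{pmatrix}.
\]

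The next step is to analyse the coefficient matrix $M$. Its determinant equals $a\bar a - b\bar b = h\Cj h = 0$, while $M\neq 0$ because $h\neq 0$; moreover $a = h_0 + h_1\qi \neq 0$, since $h\Cj h = 0$ forces $|a|^2 = |b|^2$, and $a = 0$ would then give $b = 0$ and $h = 0$. Hence $M$ has complex rank exactly one, its image is $\C\cdot(a,b)^\intercal$, and its kernel is a one-dimensional $\C$-subspace, i.e.\ a real two-plane. Consequently the solution set of $g = xh$ — as soon as it is shown to be nonempty — is a coset of a real two-plane, namely $x_0 + K$ for any particular solution $x_0$.

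For existence and the explicit description I would use the left-ruling hypothesis, which by Definition~\ref{def:left-right-ruling} says $g\Cj h = 0$. Computing the $\qj$-component of $g\Cj h$ yields $da - cb = 0$, hence $(c,d)^\intercal = \beta\,(a,b)^\intercal$ with $\beta \coloneqq ca^{-1}$; thus $x_0 = \beta = (g_0+g_1\qi)(h_0+h_1\qi)^{-1} = u$ is a solution. For the kernel, $\Cj h h = h\Cj h = 0$ and $(\qi\Cj h)h = \qi(\Cj h h) = 0$ show $\Cj h, \qi\Cj h \in K$, and they are $\R$-linearly independent because $(\qi - c)(-\qi - c) = 1 + c^2 \neq 0$ makes $\qi - c$ invertible for every $c\in\R$. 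So $\R\Cj h + \R\qi\Cj h$ is a two-plane inside $K$, and by the rank count it is all of $K$. Therefore the solution set is precisely $u + \R\Cj h + \R\qi\Cj h$, the asserted parametrization.

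The parenthetical claims go the same way. For a right ruling one repeats the argument with the map $x \mapsto hx$: one obtains $hx = (ap + b\bar q) + (aq + b\bar p)\qj$, a $\C$-linear system in the unknowns $(p,\bar q)$ with the same singular behaviour, the particular solution $u = (h_0+h_1\qi)^{-1}(g_0+g_1\qi)$ coming from $\Cj h g = 0$, and the direction space $\R\Cj h + \R\Cj h\qi$. And if $[h]$ and $[g]$ coincide — necessarily a point of $\N$ in this context, so again $a\neq 0$ — then $g = \lambda h$ for some $\lambda\in\R$ and $x_0 = \lambda = u$ still works, so the conclusion is unchanged. The only real hazards are bookkeeping ones: keeping the conjugations and the left/right multiplications straight in the $\C\oplus\C\qj$ arithmetic, and making sure that $M$ has rank exactly one rather than zero — the single place where $[h]\in\N$ and $h\neq 0$ are genuinely used.
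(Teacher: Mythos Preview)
The paper does not actually prove this theorem; it is quoted from \cite{scharler19a} and stated without argument, so there is nothing in the present paper to compare your proof against.

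On its own merits your proof is correct and complete. The decomposition $\SPQ = \C \oplus \C\qj$ with $\qj z = \bar z\qj$ (for $z\in\C = \R + \R\qi$) is the natural way to linearize $g = xh$, and your identification of the coefficient matrix with determinant $a\bar a - b\bar b = h\Cj h = 0$ is right. The key observations --- that $h\neq 0$ and $h\Cj h = 0$ force $a = h_0 + h_1\qi \neq 0$, hence rank exactly one; that the left-ruling condition $g\Cj h = 0$ gives $da = cb$ and thus puts $(c,d)^\intercal$ in the image; and that $\Cj h$, $\qi\Cj h$ span the real two-dimensional kernel because $\alpha + \beta\qi$ is invertible for $(\alpha,\beta)\neq(0,0)$ --- are all sound. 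The right-ruling case indeed reduces to a $\C$-linear system in $(p,\bar q)$ by conjugating the second equation, and the coincidence case $g = \lambda h$ is trivially covered by $u = \lambda$. The only cosmetic point is that your independence argument via ``$\qi - c$ invertible'' tacitly normalizes one coefficient; writing it as ``$\alpha + \beta\qi$ has norm $\alpha^2+\beta^2>0$'' would be slightly cleaner.
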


\subsection{Hyperbolic Motions and Split Quaternion Polynomials}
\label{sec:hyperbolic-motions-and-polynomials}

In this section we recall the split quaternion model for hyperbolic motions
of \cite[Chapter~8]{alkhaldi20} employing the terminology of \cite{wildberger10,
  wildberger11, wildberger13}. Consider the three-dimensional vector space
\begin{align*}
  \Vector{\SPQ} \coloneqq \{ h \in \SPQ \colon \Scalar{h} = 0 \}
\end{align*}
of all vectorial split quaternions and the projective plane $H^2$ over
$\Vector{\SPQ}$. Points of $H^2$ are again denoted by $[h]$ where $h \in
\Vector{\SPQ} \setminus \{ 0 \}$. The null quadric $\N$ intersects $H^2$ in the
\emph{absolute circle} or \emph{null circle}.

A \emph{rotation} in the hyperbolic plane $H^2$ is defined as the composition of
two reflections. A reflection is a homology of $H^2$ in the sense of
\cite[Section~5.7]{casas-alvero14} (a projective collineation, different from
the identity, with a line of fixed points and a bundle of fixed lines) that
preserves the absolute circle $\N \cap H^2$. The intersection point of all lines
in the bundle of fixed lines is called the \emph{center} of the reflection and
the line of fixed points is called its \emph{axis.} Center and axis are polar to
each other with respect to the null circle. A rotation has a fix point as
rotation center as well, the intersection point of the two reflection axes. In
our setting it makes sense to consider reflections as special (idempotent)
rotations because they are described by the same formula \eqref{eq:rotation} in
Theorem~\ref{th:rotation} below. Formally, this is accomplished by considering
the identity as a reflection with unspecified center and axis.

The split quaternion approach allows to describe
rotation in terms of split quaternion multiplication.

\begin{thm}[{\cite[Chapter~8]{alkhaldi20}}]\label{th:rotation}
  Let $h \in \SPQ$ be an invertible split quaternion. Then the map
  \begin{align}\label{eq:rotation}
    \varrho \colon H^2 \to H^2 \colon [x] \mapsto [h x \Cj{h}]
  \end{align}
  is a rotation with rotation center $[\Vector{h}] \in H^2$. It is a reflection
  if $\RE(h) = 0$.
\end{thm}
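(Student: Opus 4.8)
The plan is to verify directly that the map $\varrho\colon[x]\mapsto[hx\Cj h]$ is well-defined on $H^2$, that it is a projective collineation preserving the null circle, that it fixes the point $[\Vector h]$, and finally to identify it as a composition of two reflections (hence a rotation in the sense defined above), with the special idempotent case occurring precisely when $\RE(h)=0$.

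**First** I would check that $\varrho$ maps $H^2$ to itself: if $x$ is vectorial, i.e. $\Cj x=-x$, then $\Cj{(hx\Cj h)} = h\Cj x\Cj h = -hx\Cj h$, so $hx\Cj h$ is again vectorial, and it is nonzero since $h$ is invertible and $x\neq 0$; thus $[hx\Cj h]\in H^2$. Linearity in $x$ shows $\varrho$ is induced by a linear automorphism of $\Vector\SPQ$, hence a projective collineation of $H^2$. Next, $\varrho$ preserves the null circle $\N\cap H^2$: for vectorial $x$ one computes $(hx\Cj h)\Cj{(hx\Cj h)} = hx\Cj h\,h\Cj x\Cj h = (h\Cj h)^2\,x\Cj x$ using $\Cj h h = h\Cj h\in\R$ and the multiplicativity of the norm; since $h\Cj h\neq 0$, the norm of $hx\Cj h$ vanishes exactly when that of $x$ does. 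Then I would verify the fixed point: $h\Vector h\Cj h = h\Vector h\Cj h$, and since $\Vector h$ commutes with $h$ (both have the same vector part, and scalars are central), this equals $\Vector h\,h\Cj h$, a real multiple of $\Vector h$; hence $[\Vector h]$ is fixed by $\varrho$ (this requires $\Vector h\neq 0$, which holds because otherwise $h\in\R$ and $\varrho=\mathrm{id}$, the degenerate reflection with unspecified center).

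**The main work** is to exhibit $\varrho$ as a product of two reflections. Writing $h = \cos\tfrac\varphi2 + \sin\tfrac\varphi2\, n$ or $h = \cosh\tfrac\varphi2 + \sinh\tfrac\varphi2\, n$ (according to the causal type of $\Vector h$) with a suitably normalized axis direction $n$, one factors $h = h_2 h_1$ where each $h_i$ has zero scalar part, so that $\varrho = \varrho_2\circ\varrho_1$ with $\varrho_i\colon[x]\mapsto[h_i x\Cj{h_i}]$. Each $\varrho_i$ is then shown to be a homology preserving the null circle: it is an involution because $h_i\Cj{h_i}\in\R$ makes $h_i^2$ real, its set of fixed points in $H^2$ is the polar line of $[\Vector{h_i}]$ (the $x$ with $h_i x\Cj{h_i}$ parallel to $x$), and $[\Vector{h_i}]$ together with the pencil of lines through it is fixed — matching the definition of reflection with center $[\Vector{h_i}]$ and axis its polar. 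This gives $\varrho$ as a rotation with center the intersection of the two axes, which a short computation identifies with $[\Vector h]$. **The delicate point** will be handling the causal type of $\Vector h$ uniformly — space-like, time-like, or (when $[h]$ lies on $\N$, excluded here since $h$ is invertible) light-like — and ensuring the half-angle factorization $h=h_2h_1$ into vectorial factors exists in each case; one also needs $[\Vector{h_1}]\neq[\Vector{h_2}]$ so the two axes actually meet in a point, with the coincident case degenerating to $\varrho$ being a single reflection, which is exactly the situation $\RE(h)=0$. **Finally**, when $\RE(h)=0$ the quaternion $h$ is already vectorial, $h^2=-h\Cj h\in\R$, so $\varrho\circ\varrho=\mathrm{id}$ and $\varrho$ is itself a homology fixing the polar line of $[\Vector h]=[h]$ pointwise — i.e. a reflection — completing the proof.
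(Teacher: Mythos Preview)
The paper does not actually prove this theorem: it is stated with a citation to \cite[Chapter~8]{alkhaldi20} and no proof is given. So there is no ``paper's own proof'' to compare against; your proposal stands on its own.

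As a sketch your argument is sound. The well-definedness on $H^2$, the preservation of the null circle via $(hx\Cj h)\Cj{(hx\Cj h)} = (h\Cj h)^2 x\Cj x$, and the verification that $[\Vector h]$ is fixed (using that $h$ and $\Vector h$ commute) are all correct and cleanly stated. The reflection case $\RE(h)=0$ is also fine: $h^2 = -h\Cj h \in \R$ gives $\varrho^2 = \mathrm{id}$, and one checks that the fixed-point set is the polar line of $[h]$.

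The one place that needs more care is the factorization $h = h_2 h_1$ into vectorial factors. For vectorial $a$, $b$ one has $ab = -\langle a,b\rangle + a\times b$, so you need $h_0 = -\langle h_2,h_1\rangle$ and $\Vector h = h_2\times h_1$. In the split-quaternion setting the ``half-angle'' parametrization you allude to does not cover all causal types uniformly (e.g.\ when $\Vector h\,\Cj{\Vector h} = 0$ you can have $h$ invertible with null vector part, so neither the trigonometric nor the hyperbolic normalization applies), and you should verify explicitly that invertible vectorial $h_1$, $h_2$ with the required product exist in every case. This is true but deserves a line of argument rather than an appeal to half-angle formulas.
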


Rotations preserve distances with respect to the scalar product $\langle \cdot,
\cdot \rangle$. Consider an invertible split quaternion $h \in \SPQ$ and two
points $[v]$, $[w] \in H^2$ not contained in $\N$. The \emph{quadrance} (squared
distance) of $[v]$ and $[w]$ is defined by
\begin{align*}
  Q([v],[w]) \coloneqq 1 - \frac{\langle v, w \rangle^2}{\langle v, v \rangle \langle w, w \rangle}.
\end{align*}
It is a concept of universal hyperbolic geometry in the
sense of N.~Wildberger \cite{wildberger13}. In contrast to traditional hyperbolic geometry, the
quadrance might be zero and even attain negative values. Because of
\begin{multline*}
  2\langle hv\Cj{h}, hw\Cj{h} \rangle =
  hv\Cj{h}h\Cj{w}\Cj{h} +
  hw\Cj{h}h\Cj{v}\Cj{h}
  = (h\Cj{h})h(v\Cj{w}+w\Cj{v})\Cj{h} = 2(h\Cj{h})^2 \langle  v, w \rangle
\end{multline*}
it is unchanged when displacing $[v]$ and $[w]$ via \eqref{eq:rotation}. Thus,
rotations indeed preserve distances in $H^2$. For more details we refer to
\cite[Chapter~8]{alkhaldi20}.

The ring of polynomials in one indeterminate $t$ (where multiplication is
defined by the convention that $t$ commutes with all coefficients) is denoted by
$\SPQ[t]$. This convention on multiplication is motivated by our kinematic
interpretation of parametrizing motions in the hyperbolic plane via split
quaternion polynomials. The parameter $t$ can be viewed as a real motion
parameter. In the same spirit, we also define $\Cj{t} \coloneqq t$.

Consider a split quaternion polynomial $P = \sum_{\ell=0}^n p_\ell t^\ell \in
\SPQ[t]$ of degree $n \in \mathbb{N}$ with $p_0,\ldots ,p_n \in \SPQ$ and $p_n
\neq 0$. The \emph{conjugate polynomial} of $P$ is $\sum_{\ell=0}^n \Cj{p}_\ell
t^\ell$ and its \emph{norm polynomial} $P \Cj{P} = \Cj{P} P$ is a real
polynomial. \emph{Right evaluation} of $P$ at $h \in \SPQ$ is defined by $P(h)
\coloneqq \sum_{\ell = 0}^n p_\ell h ^\ell$, i.e. the indeterminate $t$ is only
substituted by $h$ after it is written to the right hand side of the
coefficients in expanded form. This is important when evaluating a product of
polynomials. Consider the two polynomials $t$, $\qi \in \SPQ[t]$. Their product
evaluated at $\qj \in \SPQ$ is equal to $(t\qi) (\qj) = (\qi t)(\qj) = \qi \qj =
\qk \neq -\qk = \qj \qi$. A split quaternion $h \in \SPQ$ is called a
\emph{right zero} of $P \in \SPQ[t]$ if $P(h) = 0$. \emph{Left evaluation} and
\emph{left zeros} are defined in the same manner.

\begin{defn}\label{def:factorization}
  A split quaternion polynomial $P \in \SPQ[t]$ of degree $n \geq 1$ admits a
  factorization into linear factors if there exist split quaternions $p$,
  $h_1,\ldots, h_n \in \SPQ$ such that
  \begin{align*}
    P = p (t-h_1) \cdots (t-h_n).
  \end{align*}
\end{defn}

The rightmost factor $t-h_n$ in Definition~\ref{def:factorization} is linked to
a right zero of $P$. We recall this result from \cite{li18d}.

\begin{lem}
  \label{lem:zero-factor}
  The split quaternion $h \in \SPQ$ is a left/right zero of the polynomial $P \in
  \SPQ[t]$ if and only if $t - h$ is a left/right factor of $P$.
\end{lem}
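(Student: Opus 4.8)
The plan is to prove both directions of the equivalence by exploiting the commutativity of $t$ with the coefficients together with the standard polynomial division algorithm in the skew polynomial ring $\SPQ[t]$. First I would establish the ``easy'' implication: if $t - h$ is a right factor, say $P = Q \cdot (t - h)$ for some $Q = \sum_{\ell=0}^{m} q_\ell t^\ell \in \SPQ[t]$, then I want to show $P(h) = 0$. The subtlety here is exactly the one flagged in the paragraph preceding the lemma: right evaluation is not a ring homomorphism, so $(QR)(h) \ne Q(h)R(h)$ in general. However, for the \emph{specific} right factor $R = t - h$ the evaluation \emph{is} compatible, because $R(h) = 0$ and more importantly the cross terms telescope. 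Concretely, write $Q \cdot (t-h) = \sum_{\ell} q_\ell t^{\ell+1} - \sum_{\ell} q_\ell t^\ell h$; evaluating on the right at $h$ means substituting $t \mapsto h$ only after all coefficients stand to the left, which yields $\sum_\ell q_\ell h^{\ell+1} - \sum_\ell q_\ell h^{\ell} h = \sum_\ell q_\ell h^{\ell+1} - \sum_\ell q_\ell h^{\ell+1} = 0$. So the key observation is that for a product whose right-most factor is linear, right evaluation distributes correctly; I would state this as the crucial computational lemma and verify it in one or two lines.

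For the converse, suppose $P(h) = 0$ and I want to exhibit $t - h$ as a right factor. Here I would invoke right division by the monic linear polynomial $t - h$ in $\SPQ[t]$: since $t - h$ is monic, the (right) division algorithm applies verbatim over the non-commutative ring $\SPQ$ — the leading coefficient being a unit ($1$) is all that is needed — giving $P = Q \cdot (t - h) + R$ with $\deg R < 1$, i.e. $R \in \SPQ$ a constant. Now apply the computational lemma from the previous paragraph (or redo the short telescoping computation): right-evaluating the identity $P = Q\cdot(t-h) + R$ at $h$ gives $P(h) = 0 + R$, hence $R = P(h) = 0$, so $P = Q \cdot (t - h)$ and $t - h$ is a right factor. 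The left-sided statement is entirely symmetric: one uses the left division algorithm $P = (t-h)\cdot Q + R$, observes that left evaluation distributes correctly when the \emph{left-most} factor is linear, and concludes $R = P(h)$ under left evaluation.

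The only genuine obstacle is getting the non-commutative bookkeeping right, and in particular being explicit that the ``$t$ commutes with coefficients'' convention is what makes the telescoping in the computational lemma work — had $t$ been a genuine non-central variable one would need Ore-style twisted evaluation. Since the paper has already fixed this convention and spelled out the distinction between formal substitution and homomorphic evaluation in the paragraph before the lemma, the write-up can be short: state and prove the one-line telescoping identity for $Q\cdot(t-h)$ evaluated at $h$, note that division by a monic linear polynomial works over any ring, and read off the remainder. I would also remark that this is the skew-polynomial analogue of the classical factor theorem and cite \cite{li18d} as the original source, as the excerpt does.
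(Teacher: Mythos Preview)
The paper does not give its own proof of this lemma; it simply recalls the statement from \cite{li18d} and moves on. Your argument is correct and is precisely the standard proof of the factor theorem over a skew polynomial ring with central indeterminate: the telescoping computation for $Q\cdot(t-h)$ right-evaluated at $h$, combined with division by a monic linear polynomial, is exactly what one finds in the cited reference. There is nothing to compare and no gap to flag.
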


In contrast to polynomials over the real or complex numbers, this statement does
not hold for all linear factors of a split quaternion polynomial. Consider the
polynomial $P = (t-\qj)(t-\qi)$. Evaluation of $P$ at $\qi$ and $\qj$ yields
\begin{align*}
  P(\qi) &= (t-\qj)(t-\qi)(\qi) = (t^2 - (\qi + \qj)t - \qk)(\qi) = \qi^2 - \qi^2 - \qj\qi - \qk = 0, \\
  P(\qj) &= \qj^2 - \qi\qj - \qj^2 - \qk = -2\qk \neq 0,
\end{align*}
respectively. Hence, $\qi$ is a right zero of $P$ but $\qj$ is not. However,
$\qj$ is a left zero of~$P$.

Let $P \in \SPQ[t]$ be a split quaternion polynomial with $P\Cj{P} \neq 0$. If
we use $P$ instead of $h$ in Eq.~\eqref{eq:rotation} we obtain a one-parametric
motion in $H^2$. The trajectories of a point $[x] \in H^2$ are given by
\begin{align*}
  [x] \mapsto [P(t)x\Cj{P(t)}].
\end{align*}
They are rational curves parametrized by the parameter $t \in \R$. Since real
numbers commute with split quaternions we have $P(t)x\Cj{P(t)} = (Px\Cj{P})(t)$.
If $P(t_0)x\Cj{P(t_0)} = 0$ for some $t_0 \in \R$ then the corresponding point
$[P(t_0)x\Cj{P(t_0)}]$ is defined by continuity.

Note that we will later also consider parameter values $t \in \C$. Moreover, in
order to get algebraically closed trajectories, we should allow the parameter
value $t = \infty$ and define $P(\infty) \coloneqq p$ (the leading coefficient
of $P$).

In the following we will only consider a certain set of split quaternion
polynomials, namely those which
\begin{enumerate}
\item[1.] have a norm polynomial different from zero,
\item[2.] are monic, i.e. their leading coefficient is equal to $1 \in \R
  \subset \SPQ$ and
\item[3.] have no real or complex polynomial factors of positive degree.
\end{enumerate}

The first and second assumptions are no loss of generality from a kinematic
point of view. Polynomials with a vanishing norm polynomial do not describe
proper motions because they map all point in $H^2$ onto $\N$. Let $P \in
\SPQ[t]$ be a polynomial such that $P\Cj{P} \neq 0$ and let $p \in \SPQ$ be its
leading coefficient. If $p$ is invertible, then $P$ admits a factorization into
linear factors if and only if $p^{-1}P$ does. If $p$ is not invertible then we
can apply a proper, bijective re-parametrization to $P$ and obtain a polynomial
whose leading coefficient is invertible. The third item in our list is only a
technical assumption for now and will be dropped later on. We call split
quaternions without real or complex polynomial factors of positive degree
\emph{reduced}.

Assume that $P \in \SPQ[t]$ is a reduced, monic split quaternion polynomial
which admits a factorization into linear factors $P = (t-h_1) \cdots (t-h_n)$.
By Theorem~\ref{th:rotation}, each of the factors $t-h_i \in \SPQ[t]$ represents
a rotation with rotation center $[\Vector{t-h_i}] = [\Vector{h_i}]$. Therefore,
$P$ represents the motion obtained from the composition of these rotations --
the motion of an open kinematic chain of hyperbolic revolute joints
\cite[Chapter~8]{alkhaldi20}.

\section{Factorization Results}
\label{sec:factorization-results}

\subsection{State of the Art}
\label{sec:state-of-the-art}

In \cite{hegedus13} the authors developed an algorithm to decompose generic
``motion polynomials'' into linear ``motion polynomial factors''. Motion
polynomials are polynomials with coefficients in the non-commutative ring of
dual quaternions whose coefficients satisfy a certain quadratic constraint, the
\emph{Study condition}, that ensures that the norm polynomial is real
(c.\,f.~Section~\ref{sec:euclidean-factorization}). The factorization algorithm
can be adapted for split quaternions and will succeed in ``generic'' cases. In
Section~\ref{sec:factor-non-generic} we will develop a modified version of
this algorithm that provably succeeds whenever a factorization exists. In these
cases, it can be used to compute all factorizations.

Let $P \in \SPQ[t]$ be a reduced, monic split quaternion
polynomial and let $N \in \R[t]$ be a quadratic factor of its norm polynomial
$P\Cj{P} \in \R[t]$. Polynomial division of $P$ by $N$ yields unique polynomials
$Q$, $R \in \SPQ[t]$ such that $P = QN + R$ and $\deg(R) < \deg(N) = 2$.
Moreover, we have
\begin{align*}
  P\Cj{P} = (QN + R)\Cj{(QN + R)} = (Q\Cj{Q}N + Q\Cj{R} + R\Cj{Q})N + R\Cj{R}
\end{align*}
and $N$ divides $R\Cj{R}$. If the leading coefficient of $R = r_1t+r_0 \in
\SPQ[t]$ is invertible, then $h \coloneqq -r_1^{-1}r_0 \in \SPQ$ is the unique
right zero of $R$. Since $N$ divides $R\Cj{R} = \Cj{R}R$, $h$ is also a right
zero of $N$ and hence also of $P = QN + R$. By Lemma~\ref{lem:zero-factor}, $t-h
\in \SPQ[t]$ is a right factor of $P$ which therefore can be written as $P =
P'(t-h)$. An inductive application of the procedure above yields a (generic)
factorization algorithm \cite{li18d}.

\begin{rmk}
  A different choice of the quadratic factor $N$ yields a different right factor
  $t-h$. Therefore, the number of factorizations of $P$ depends on the number of
  distinct real quadratic factors of $P\Cj{P}$. The factorization algorithm can
  also be adapted to compute left factors: The unique left zero $g \coloneqq
  -r_0r_1^{-1}$ of $R$ yields a left factor $t-g \in \SPQ[t]$ of~$P$.
\end{rmk}

A pseudocode listing is provided in Algorithm~\ref{alg:generic-algorithm}. For
the sake of brevity, it uses the following notations: Let $G \in \SPQ$ be a
polynomial with invertible leading coefficient. Due to non-commutativity of
split quaternion multiplication we have to distinguish between \emph{left} and
\emph{right} polynomial division. The unique polynomials $Q$, $R \in \SPQ[t]$
such that $P = GQ + R$ ($P = QG + R$) are called the \emph{right (left)
  quotient} and \emph{right (left) remainder} of $P$ and $G$. They are denoted
by $Q = \rquo(P,G)$ and $R = \rrem(P,G)$ ($\lquo(P,G)$ and $\lrem(P,G)$). If $G$
is a real polynomial, then the left/right quotient and the left/right remainder
coincide, respectively. We denote them by $\quo(P,G)$ and $\rem(P,G)$. Moreover,
we use the symbol ``$\oplus$'' to denote non-commutative concatenation of
tuples, i.e. $L_1 \oplus L_2$ is the tuple which starts with the elements of the
tuple $L_1$ and ends with the elements of the tuple $L_2$.

\begin{algorithm}
  \caption{$\mathtt{GFactor}$ (Generic Factorization Algorithm)}\label{alg:generic-algorithm}
  \begin{algorithmic}[1]
    \Require $P \in \SPQ[t]$, a generic, reduced, monic, split quaternion polynomial of
    degree $n \geq 1$.
    \Ensure A tuple $L = (L_1, \ldots, L_n)$ of linear split quaternion polynomials such that $P = L_1 \cdots L_n$.
    \Statex \If{$\deg(P) = 0$}
    \State \Return ()
    \EndIf
    \State $N \leftarrow$ quadratic real factor of the norm polynomial $P\Cj{P} \in \R[t]$
    \State $R \leftarrow \rem(P,N)$
    \State $h \leftarrow$ unique right zero of $R$
    \State $P \leftarrow \lquo(P,t-h)$
    \State \Return $\mathtt{GFactor}(P) \oplus (t-h)$
  \end{algorithmic}	 	
\end{algorithm}

The term ``generic'' in Algorithm~\ref{alg:generic-algorithm} refers to $R$
having a unique right zero in each iteration. This is equivalent to the
condition $R\Cj{R} \neq 0$ because $N$ divides $R\Cj{R}$ and therefore
$\deg(R\Cj{R}) = 2$ or $R\Cj{R} = 0$. The latter one implies that the leading
coefficient of $R$ is not invertible and $h$ in
Algorithm~\ref{alg:generic-algorithm} is not well-defined. In fact, we have:

\begin{lem}[\cite{li18a}]
  \label{lem:null-lines}
  Let $R = r_1t + r_0 \in \SPQ[t]$ be a split quaternion polynomial of degree
  one with linearly independent coefficients $r_0$, $r_1 \in \SPQ$. Then the set
  $\{[R(t)] \colon t \in \R \cup \{\infty\}\}$ is the straight line spanned by
  $[r_0]$ and $[r_1]$. It is a null line if and only if $R \Cj{R} = 0$.
\end{lem}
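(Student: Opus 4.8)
The plan is to prove Lemma~\ref{lem:null-lines} by combining the elementary description of the parametrized line with the characterization of rulings from Theorem~\ref{th:left-right-ruling}. First I would observe that since $r_0$ and $r_1$ are linearly independent in the vector space $\SPQ$, the map $t \mapsto [r_1 t + r_0]$ together with the value at $t = \infty$, which gives $[r_1]$, sweeps out exactly the projective line $[r_0] \vee [r_1]$; this is just the standard fact that an affine line plus its point at infinity is a projective line, and the linear independence guarantees it is a genuine line and not a single point. So the first assertion is immediate and the content is entirely in the ``null line'' equivalence.

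For the second assertion I would compute $R\Cj{R}$ explicitly. Writing $R\Cj{R} = (r_1 t + r_0)(\Cj{r}_1 t + \Cj{r}_0) = r_1\Cj{r}_1 t^2 + (r_1\Cj{r}_0 + r_0\Cj{r}_1) t + r_0\Cj{r}_0$, and recalling that $r_1\Cj{r}_0 + r_0\Cj{r}_1 = 2\langle r_1, r_0\rangle$ while $r_i\Cj{r}_i = \langle r_i, r_i\rangle$, we see $R\Cj{R} = \langle r_1,r_1\rangle t^2 + 2\langle r_0,r_1\rangle t + \langle r_0,r_0\rangle$, a real polynomial. Now $R\Cj{R} = 0$ as a polynomial is equivalent to $\langle r_0,r_0\rangle = \langle r_1,r_1\rangle = \langle r_0,r_1\rangle = 0$. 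This says precisely that every point $[\lambda r_0 + \mu r_1]$ on the line satisfies $\langle \lambda r_0 + \mu r_1, \lambda r_0 + \mu r_1\rangle = \lambda^2\langle r_0,r_0\rangle + 2\lambda\mu\langle r_0,r_1\rangle + \mu^2\langle r_1,r_1\rangle = 0$, i.e. the whole line $[r_0]\vee[r_1]$ lies on the null quadric $\N$. By Definition~\ref{def:null_quadric} this is exactly the statement that $[r_0]\vee[r_1]$ is a null line. Conversely, if the line is a null line then in particular $[r_0]$, $[r_1]$ and $[r_0 + r_1]$ all lie on $\N$, which forces the three scalar products to vanish and hence $R\Cj{R} = 0$.

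I do not expect a genuine obstacle here; the only point requiring minor care is the passage from ``$R\Cj{R}$ vanishes identically as a polynomial in $t$'' to ``all three bilinear-form values vanish,'' which is just comparing coefficients of a degree-$\leq 2$ real polynomial, and the symmetric observation that three points in general position on a conic (here, the three non-collinear-in-parameter points at $t = 0$, $t = \infty$, $t = 1$) already pin down that the spanning quadratic form restricts to zero on the line. One could alternatively phrase the ``null line'' direction via Theorem~\ref{th:left-right-ruling}, exhibiting the line as either $\mathcal{L}$ or $\mathcal{R}$ through one of its points, but the direct coefficient comparison is cleaner and self-contained, so that is the route I would take.
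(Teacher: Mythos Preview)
Your argument is correct and self-contained. Note, however, that the paper does not actually prove this lemma: it is quoted from \cite{li18a} with no proof given, so there is no ``paper's own proof'' to compare against. Your direct computation of $R\Cj{R}$ as the real quadratic $\langle r_1,r_1\rangle t^2 + 2\langle r_0,r_1\rangle t + \langle r_0,r_0\rangle$ and the identification of its identical vanishing with the restriction of the bilinear form to the span of $r_0,r_1$ being zero is exactly the natural proof one would expect, and it works without any subtlety.
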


We often will say that $R$ ``parametrizes'' the straight line spanned by $[r_0]$
and $[r_1]$. Theorem~\ref{th:affine-two-plane-of-zeros} and
Lemma~\ref{lem:null-lines} imply that a polynomial $R$ as in
Lemma~\ref{lem:null-lines} and with $R\Cj{R} = 0$ has an affine two-parametric
set of right or left zeros, respectively.

\begin{example}\label{ex:null-line-remainder-polynomial-1}
  Consider the polynomial $P_1 = t^2 - (\qi + \qj) t - \qk$. The norm polynomial
  $P_1\Cj{P_1} = t^4-1$ has the two quadratic real factors $N_1 = t^2+1$ and
  $N_2 = t^2-1$. Their respective remainder polynomials $R_1 = \rem(P_1,N_1) =
  -(\qi + \qj) t - 1 - \qk$ and $R_2 = \rem(P_1,N_2) = -(\qi + \qj) t + 1 -
  \qk$ parametrize null lines because $R_1 \Cj{R}_1 = R_2 \Cj{R}_2 = 0$.
  Moreover, $R_1$ parametrizes a right ruling of $\N$ and $R_2$ parametrizes a
  left ruling because
  \begin{align*}
    -\Cj{(\qi + \qj)} (-1 - \qk) = 0 = -(\qi + \qj) \Cj{(1 - \qk)}.
  \end{align*}
  Hence, $R_1$ has infinitely many right zeros and $R_2$ has infinitely many
  left zeros.
\end{example}

The statements on left/right zeros are no longer true if the coefficients of $R$
are linearly dependent. Recall that in this case $R$ ``parameterizes'' a single
point on~$\N$.

\subsection{Factorization of Non-Generic Polynomials}
\label{sec:factor-non-generic}

In this section we generalize the results from
Section~\ref{sec:state-of-the-art} and investigate methods to compute
factorizations into linear polynomials of reduced, monic split quaternion
polynomials, including non-generic polynomials. In addition, we provide an
algorithm for computing such factorizations. To do so, we recall a result on the
split quaternion zeros of a real polynomial.

\begin{lem}[\cite{scharler19a, huang02}]
  \label{lem:factor-real-polynomial}
  Let $P = t^2 + bt + c \in \R[t]$ be a real polynomial. The set of zeros of $P$
  in $\SPQ \setminus \R$ is given by
  \begin{align*}
    \{ \tfrac{1}{2}(-b + h_1\qi + h_2\qj + h_3\qk) \colon h_1^2-h_2^2-h_3^2 = 4c-b^2 \}.
  \end{align*}
  In particular, the set of split quaternion zeros of a quadratic real
  polynomial is two-parametric.
\end{lem}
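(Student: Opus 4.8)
The plan is to substitute a general element $x = x_0 + x_1\qi + x_2\qj + x_3\qk \in \SPQ$ into $P = t^2 + bt + c$ using right evaluation (which for a real polynomial coincides with left evaluation since real coefficients are central), expand $P(x) = x^2 + bx + c$ into its scalar and vectorial parts, and solve the resulting real system. First I would compute $x^2 = (x_0^2 + x_1^2 - x_2^2 - x_3^2) + 2x_0\Vector{x}$, using $\qi^2 = 1$, $\qj^2 = \qk^2 = -1$ and the fact that the cross terms $x_ix_j$ with $i\neq j$ cancel in the scalar part. Hence $x^2 = (2x_0^2 - x\Cj{x}) + 2x_0\Vector{x}$, where $x\Cj{x} = x_0^2 - (x_1^2 - x_2^2 - x_3^2)$; equivalently $\Scalar{x^2} = x_0^2 + x_1^2 - x_2^2 - x_3^2$ and $\Vector{x^2} = 2x_0(x_1\qi + x_2\qj + x_3\qk)$.

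Then $P(x) = 0$ splits into the vectorial equation $(2x_0 + b)\Vector{x} = 0$ and the scalar equation $x_0^2 + x_1^2 - x_2^2 - x_3^2 + bx_0 + c = 0$. Since we seek zeros in $\SPQ \setminus \R$, we have $\Vector{x} \neq 0$, so the vectorial equation forces $2x_0 + b = 0$, i.e. $x_0 = -\tfrac{b}{2}$. Substituting this into the scalar equation gives $\tfrac{b^2}{4} + x_1^2 - x_2^2 - x_3^2 - \tfrac{b^2}{2} + c = 0$, that is $x_1^2 - x_2^2 - x_3^2 = \tfrac{b^2}{4} - c = \tfrac{1}{4}(b^2 - 4c)$. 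Writing $x_i = \tfrac12 h_i$ for $i = 1,2,3$ turns this into $h_1^2 - h_2^2 - h_3^2 = b^2 - 4c$; note the excerpt states the right-hand side as $4c - b^2$, so I would double-check the sign convention (it may stem from a discrepancy I need to reconcile, or the statement as printed uses the opposite sign and I should match it). In either case, $x = \tfrac12(-b + h_1\qi + h_2\qj + h_3\qk)$ with the $h_i$ constrained by a single quadratic equation, which describes the claimed set.

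Finally, for the "two-parametric" claim I would observe that the solution set is cut out inside the affine $3$-space $\{-\tfrac{b}{2} + x_1\qi + x_2\qj + x_3\qk\}$ by one non-degenerate quadratic equation $h_1^2 - h_2^2 - h_3^2 = \text{const}$, hence is a (possibly degenerate) quadric surface, which is two-dimensional whenever it is non-empty. Over $\R$ the right-hand side can be positive, negative, or zero, but in each case the quadric $h_1^2 - h_2^2 - h_3^2 = k$ is a two-parameter family (a one- or two-sheeted hyperboloid, or a cone minus its apex when $k = 0$), so there are always infinitely many zeros forming a two-parametric set, and conversely every such $x$ is easily checked to satisfy $P(x) = 0$ by reversing the computation. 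The only mild subtlety to address is the boundary case $k = 0$, where the "surface" is a quadric cone: its vertex corresponds to $\Vector{x} = 0$, i.e. a real zero, which is correctly excluded, and the remaining points still form a two-parametric set. I expect the main obstacle to be purely bookkeeping — carefully tracking the split-signature signs in $x^2$ and matching them to the sign convention in the statement — rather than anything structural.
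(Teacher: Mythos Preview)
The paper does not supply its own proof of this lemma; it is quoted with citation to \cite{scharler19a,huang02}. Your direct substitution argument is exactly the standard route and is structurally sound: split $P(x)=0$ into scalar and vector parts, use $\Vector{x}\neq 0$ to force $x_0=-b/2$, and read off the quadratic constraint on the vector part.

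There is, however, a bookkeeping slip that explains the sign ``discrepancy'' you flag. In the split quaternions one has $\qi^2=-1$ and $\qj^2=\qk^2=+1$ (from $\qi^2=-\qj^2=-\qk^2=-1$), not the other way around. Consequently, for $v=x_1\qi+x_2\qj+x_3\qk$ one gets
\[
v^2 = -x_1^2 + x_2^2 + x_3^2,\qquad
\Scalar{x^2}=x_0^2 - x_1^2 + x_2^2 + x_3^2,
\]
and also $x\Cj{x}=x_0^2+x_1^2-x_2^2-x_3^2$ (as stated in the paper), not $x_0^2-(x_1^2-x_2^2-x_3^2)$. With the correct signs the scalar equation at $x_0=-b/2$ becomes
\[
x_1^2-x_2^2-x_3^2=\tfrac{1}{4}(4c-b^2),
\]
and after $x_i=\tfrac12 h_i$ you recover exactly $h_1^2-h_2^2-h_3^2=4c-b^2$, matching the statement. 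So there is no sign convention to reconcile --- only your values of $\qi^2,\qj^2,\qk^2$ need correcting. The remainder of your argument (the two-parametric quadric in the affine hyperplane $\Scalar{x}=-b/2$, with the cone case handled by excluding the apex) is fine and coincides with what is noted in Remark~\ref{rmk:factor-real-polynomial}.
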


\begin{rmk}\label{rmk:factor-real-polynomial}
  By Lemma~\ref{lem:zero-factor}, real polynomials of degree greater than one
  admit infinitely many factorizations over $\SPQ$. Depending on the sign of
  $4c-b^2$ ($<0$, $=0$ or $>0$) the set of zeros in
  Lemma~\ref{lem:factor-real-polynomial} describes a hyperboloid of one sheet, a
  quadratic cone or a hyperboloid of two sheets, respectively, in the
  three-dimensional affine space $\{ h \in \SPQ \colon \Scalar{h} = -\frac{b}{2}
  \}$ (Figure~\ref{fig:hyperboloids}). If $b = 0$, this three-space equals the
  vector space $\IM(\SPQ)$.
\end{rmk}

\begin{lem}\label{lem:null-line-remainder-polynomial}
  Consider a reduced, monic split quaternion polynomial $P \in \SPQ[t]$ and let
  $N \in \R[t]$ be a quadratic factor of its norm polynomial $P\Cj{P}$. If $R
  \coloneqq \rem(P,N)$ parametrizes a right ruling of $\N$, then there exists a
  unique split quaternion $h \in \SPQ$ such that $t-h \in \SPQ[t]$ is a right
  factor of $P$ and $N = (t - h)(t - \Cj{h})$. (The same statement holds for $R$
  parametrizing a left ruling yielding a left factor.)
\end{lem}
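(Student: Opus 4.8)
The plan is to exploit the divisibility relation $N \mid R\Cj{R}$ established just before Lemma~\ref{lem:null-lines} together with the hypothesis that $R$ parametrizes a right ruling. First I would observe that since $R = \rem(P,N)$ has degree at most one and $N$ divides $R\Cj{R} \in \R[t]$ with $\deg N = 2$, either $R\Cj{R} = 0$ or $\deg(R\Cj{R}) = 2$; in the latter case $N$ and $R\Cj{R}$ would be associated, forcing $R$ to parametrize a null line only if $R\Cj{R}=0$ again. So the assumption ``$R$ parametrizes a right ruling'' already entails $R\Cj{R} = 0$ by Lemma~\ref{lem:null-lines}, and moreover the coefficients $r_0$, $r_1$ of $R = r_1 t + r_0$ are linearly independent (otherwise $R$ would parametrize a single point, not a ruling).

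Next I would produce the right factor. Because $R$ parametrizes a right ruling, Definition~\ref{def:left-right-ruling} gives $\Cj{r_1} r_0 = 0$ (reading the ruling condition on the spanning points $[r_0]$, $[r_1]$), hence $\Cj{R}R = 0$ identically in $t$. By Theorem~\ref{th:affine-two-plane-of-zeros} applied to the right ruling $[r_1] \vee [r_0]$, the equation $R(h) = r_1 h + r_0 = 0$, equivalently $r_0 = r_1(-h)$... wait---more carefully, $R$ here is $r_1 t + r_0$, so a right zero $h$ solves $r_1 h + r_0 = 0$, i.e.\ $h$ ranges over an affine two-plane of solutions of $r_0 = r_1 x$ with $x = -h$. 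This gives infinitely many right zeros $h$ of $R$, hence (since $N \mid R\Cj{R} = \Cj{R}R$) infinitely many right zeros of $N$, hence right zeros of $P = QN + R$. By Lemma~\ref{lem:zero-factor} each such $h$ yields a right factor $t - h$ of $P$. So far this gives existence but \emph{not} uniqueness, and this is the crux: I must single out the one $h$ for which additionally $N = (t-h)(t-\Cj{h})$.

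To nail down uniqueness I would argue as follows. Write $N = t^2 + bt + c$ with $b, c \in \R$. Among the two-parameter family of right zeros $h$ of $N$, expanding $(t-h)(t-\Cj{h}) = t^2 - (h + \Cj{h})t + h\Cj{h} = t^2 - 2\Scalar{h}\,t + h\Cj{h}$ shows that the extra condition $N = (t-h)(t-\Cj{h})$ forces $2\Scalar{h} = -b$ and $h\Cj{h} = c$; by Lemma~\ref{lem:factor-real-polynomial} this still leaves a two-parameter family of candidates in general, so uniqueness cannot come from $N$ alone---it must come from the interaction with $R$. The right move is: among right zeros of $R$, the condition that makes $t-h$ a right factor of $P$ with quotient whose norm polynomial picks up exactly $N$ is what pins $h$ down. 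Concretely, $P = P'(t-h)$ forces $P\Cj P = P'\Cj{P'}\,(t-h)(t-\Cj h)$, so $(t-h)(t-\Cj h)$ must be the specific quadratic real factor of $P\Cj P$ ``carried'' by this step; since a right zero $h$ of $R$ satisfies $R(h)=0$ and $R$ has an affine two-plane of right zeros all lying (via Theorem~\ref{th:affine-two-plane-of-zeros}) in the plane $u + \lambda\Cj{r_1} + \mu\qi\Cj{r_1}$, imposing $\Scalar h = -b/2$ and $h\Cj h = c$ on this plane---together with the fact that $[r_1]$ lies on $\N$ so $\Cj{r_1}$ is vectorial-of-null-type---cuts the two-plane down to a single point. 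I expect the main obstacle to be exactly this last transversality computation: verifying that the affine equations ``$\Scalar{h}$ prescribed'' and ``$h\Cj h$ prescribed'' meet the right-zero two-plane of $R$ in precisely one point, using that $R$ is reduced-compatible (i.e.\ $R$ genuinely parametrizes a ruling, not a point) and that $[r_0],[r_1] \in \N$. Once that intersection is shown to be a single point $h$, that $h$ is the asserted unique split quaternion, $t-h$ is a right factor of $P$ by Lemma~\ref{lem:zero-factor}, and $N = (t-h)(t-\Cj h)$ holds by construction; the left-ruling case is verbatim with left zeros and $\Cj{h}\cdots$, invoking the parenthetical parts of Theorems~\ref{th:left-right-ruling} and~\ref{th:affine-two-plane-of-zeros}.
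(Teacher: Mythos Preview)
Your overall strategy coincides with the paper's: reduce to finding a common right zero of $N$ and $R$, parametrize the right zeros of $R$ as an affine two-plane (via Theorem~\ref{th:affine-two-plane-of-zeros}), and then intersect with the zero locus of $N$. The paper carries this out by first normalizing to $b=0$, intersecting the two-plane $W$ of zeros of $R$ with $\IM(\SPQ)$ to obtain a line $L$, and then arguing that $L$ meets the quadric $Z$ of zeros of $N$ in exactly one point.

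However, two points in your write-up need repair. First, the sentence ``infinitely many right zeros $h$ of $R$, hence (since $N \mid R\Cj{R} = \Cj{R}R$) infinitely many right zeros of $N$'' is a non sequitur: here $R\Cj{R}=0$, so the divisibility is vacuous and does \emph{not} transfer zeros of $R$ to zeros of $N$. Existence is therefore not yet established at that stage; it only follows once you have actually exhibited a point in $W \cap \{N(h)=0\}$. Second, ``$[r_1]\in\N$ so $\Cj{r_1}$ is vectorial'' is false: $r_1\Cj{r_1}=0$ says nothing about $\Scalar{r_1}$.

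More substantially, the transversality you flag as ``the main obstacle'' is genuinely delicate and your sketch does not indicate how to resolve it. Imposing $\Scalar{h}=-b/2$ on the two-plane $W$ could in principle give nothing, a line, or all of $W$; and a line could then meet the quadric $Z$ in $0$, $1$, $2$, or infinitely many points. The paper rules out the degenerate cases by two concrete arguments: (i) it shows $W\not\subset\IM(\SPQ)$ (otherwise $r_1=0$), so $L\coloneqq W\cap\IM(\SPQ)$ is a line; (ii) it observes that $W$ is parallel to the plane $V=\{\lambda\Cj{r_1}+\mu\Cj{r_1}\qi\}\subset\NC$, so $L$ is parallel to a ruling of the asymptotic cone $\NC\cap\IM(\SPQ)$ of $Z$, forcing $\vert L\cap Z\vert=1$ \emph{provided} $L$ is not contained in a tangent plane of that cone; (iii) it then excludes the tangent-plane case by a direct computation using the Euclidean cross product in $\IM(\SPQ)$, deriving linear dependence of $r_0,r_1$ as a contradiction. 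Your proposal does not supply any of (i)--(iii), and without them the uniqueness (and even existence) claim is unproved.
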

\begin{proof}
  Let $R = r_1t + r_0$ parametrize the right ruling $[r_0] \vee [r_1] \subset
  \N$ (c.\,f. Lemma~\ref{lem:null-lines}) and define $Q \coloneqq \quo(P,N)$.
  Because of $P = QN + R$, the sought common right zero $h$ of $P$ and $N$ must
  also be a common right zero of $N$ and $R$, and vice versa. We will show that
  there exists a unique split quaternion $h$ with this property. The proof
  proceeds with geometric descriptions of the zero sets of $N$ and $R$ in the
  affine space of split quaternions. Their unique intersection point is the
  common zero~$h$.

  A split quaternion is a zero of $R$ if and only if it solves the equation
  $-r_0=r_1x$. According to Theorem~\ref{th:affine-two-plane-of-zeros}, there
  exists an affine two-parametric plane of right zeros of $R$, let's call it $W
  \subset \SPQ$. It is parallel to the two-dimensional vector space $V
  \coloneqq \{ \lambda \Cj{r}_1 + \mu \Cj{r}_1 \qi \colon \lambda, \mu \in \R
  \}$ which parametrizes the solution set of the homogeneous equation $r_1x =
  0$. We compute
  \begin{multline*}
    (\lambda \Cj{r}_1 + \mu \Cj{r}_1 \qi) \Cj{(\lambda \Cj{r}_1 + \mu \Cj{r}_1 \qi)} = (\lambda \Cj{r}_1 + \mu \Cj{r}_1 \qi)(\lambda r_1 - \mu \qi r_1) \\=
    \lambda^2 \Cj{r}_1 r_1 - \lambda \mu \Cj{r}_1 \qi r_1 + \lambda \mu \Cj{r}_1 \qi r_1 + \mu^2 \Cj{r}_1 r_1 = (\lambda^2 + \mu^2) \Cj{r}_1r_1 = 0,
  \end{multline*}
  implying that $V$ is contained in the null cone $\NC$.

  Write $N = t^2 + bt + c$ with $b$, $c \in \R$. By applying the parameter
  transformation $t \mapsto t - \frac{b}{2}$ to $P$ we can assume without loss
  of generality that $b=0$. Set $L \coloneqq W \cap \IM(\SPQ)$ and define $Z$ as
  the set of split quaternion zeros of $N$ by
  Lemma~\ref{lem:factor-real-polynomial}.

  The affine two-plane $W$ can not be contained in $\IM(\SPQ)$ because then $V$
  is also contained in $\IM(\SPQ)$ and $\Scalar{\lambda \Cj{r}_1 + \mu \Cj{r}_1
    \qi} = 0$ for all $\lambda$, $\mu \in \R$ which, together with $r_1\Cj{r}_1
  = 0$, yields $r_1 = 0$. This is a contradiction to the assumption that $[r_0]
  \vee [r_1]$ spans a ruling of $\N$. By simply counting dimensions, we infer
  that $L$ is an affine line in the three-dimensional vector space $\IM(\SPQ)$.
  Since $W$ is parallel to $V$ and $V \subset \NC$, the line $L$ is parallel to
  a ruling of the affine cone $\NC \cap \IM(\SPQ)$. The later one is the
  asymptotic cone of $Z$ in $\IM(\SPQ)$. Therefore, the line $L$ intersects $Z$
  in precisely one point if $L$ is not contained in a tangent-plane of $\NC \cap
  \IM(\SPQ)$ (Figure~\ref{fig:hyperboloids}). We will see below that this is
  indeed the case. Hence, $L$ and $Z$ indeed intersect in a point $h \in \SPQ$
  which is the unique common right zero of $N$ and $R$ and yields the right
  factor $t-h \in \SPQ[t]$ of $P$ by Lemma~\ref{lem:zero-factor}.
  
  Thus, the proof is finished if we can show that $L$ is not contained in a
  tangent plane of the cone $\NC \cap \IM(\SPQ)$. Assume the opposite, i.e. let
  $L$ be contained in the tangent plane of an element $0 \neq p \in \NC \cap
  \IM(\SPQ)$. Then $L$ is parallel to the ruling through $p$, let's call it
  $L_p$. The \emph{Euclidean} common normal of $L$ and $L_p$ through $0 \in
  \IM(\SPQ)$ is spanned by $p \times \qi p \qi$, where $\qi p \qi$ is the
  reflection of $p$ in the plane spanned by $\qj$ and $\qk$ and ``$\times$''
  refers to the \emph{Euclidean} cross product in the three-dimensional vector
  space $\IM(\SPQ)$. Hence, there exists an $\alpha \in \R$ such that $\alpha (p
  \times \qi p \qi) \in L$. Therefore $\alpha (p \times \qi p \qi)$ is a zero of
  $R$, i.e. $r_0 = -r_1 \alpha (p \times \qi p \qi)$. Parallelity of $L$ and
  $L_p$ yields $p \in V$ and there exist $\beta$, $\gamma \in \R$ such that $p =
  \beta \Cj{r}_1 + \gamma \Cj{r}_1 \qi$. Inserting this identity into $r_0 =
  -r_1 \alpha (p \times \qi p \qi)$ and using the properties $r_1\Cj{r}_1 = 0$
  and $p \in \IM(\SPQ)$ imply linear dependency of $r_1$ and $r_0$. This is a
  contradiction to $[r_0] \vee [r_1]$ spanning a ruling of $\N$.\end{proof}

\begin{figure}
  \centering
  \begin{minipage}{0.33\linewidth}
    \centering
    \includegraphics{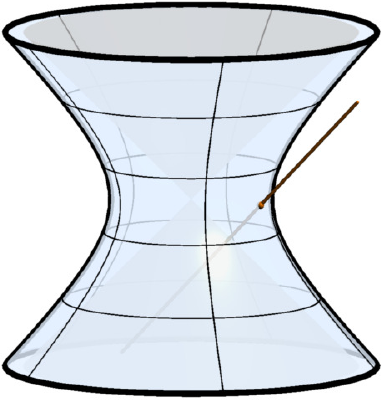}
  \end{minipage}\hfill
  \begin{minipage}{0.33\linewidth}
    \centering
    \includegraphics{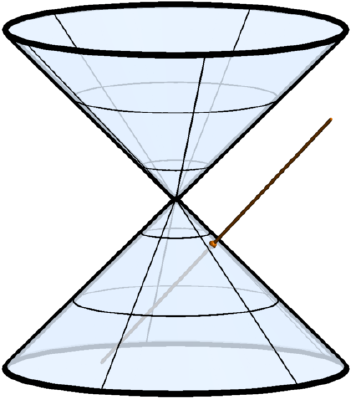}
  \end{minipage}\hfill
  \begin{minipage}{0.33\linewidth}
    \centering
    \includegraphics{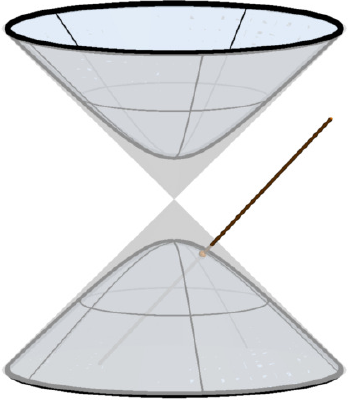}
  \end{minipage}\\[1ex]
  \begin{minipage}{0.33\linewidth}
    \centering
    \includegraphics{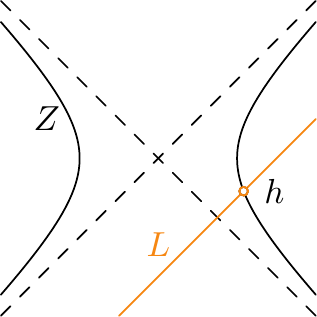}
  \end{minipage}\hfill
  \begin{minipage}{0.33\linewidth}
    \centering
    \includegraphics{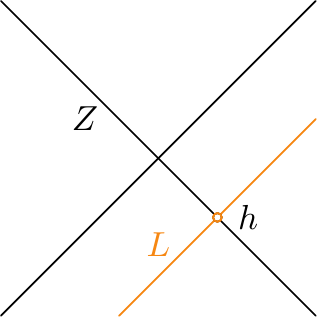}
  \end{minipage}\hfill
  \begin{minipage}{0.33\linewidth}
    \centering
    \includegraphics{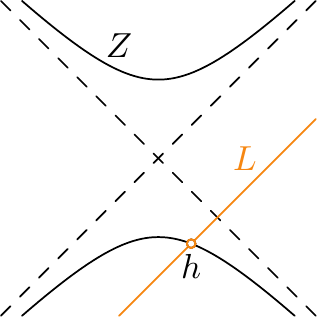}
  \end{minipage}
    \caption{Intersection of the line $L$ and the set of zeros of $N$ from the
      proof of Lemma~\ref{lem:null-line-remainder-polynomial} for $N = t^2-1$,
      $N = t^2$ and $N = t^2+1$, respectively.}
    \label{fig:hyperboloids}
\end{figure}

\begin{rmk}
  \label{rem:null-line-remainder-polynomial}
  Computing the unique zero $h$ of $P$ in
  Lemma~\ref{lem:null-line-remainder-polynomial} is easy. We have to solve the
  quadratic system resulting from $R(h) = N(h) = 0$. Plugging the solution of
  the linear sub-system $R(h) = 0$ into $N(h) = 0$ will result in a linear
  system with exactly one solution. This not only works if $R\Cj{R} = 0$ but
  also in generic cases.
\end{rmk}

\begin{example}\label{ex:null-line-remainder-polynomial-2}
  Consider the polynomial $P_1 = t^2 - (\qi + \qj) t - \qk$ from
  Example~\ref{ex:null-line-remainder-polynomial-1} and the factors $N_1 =
  t^2+1$, $N_2 = t^2-1$ of $P_1\Cj{P}_1$. Among the infinitely many right zeros of $R_1 =
  \rem(P_1,N_1) = -(\qi + \qj) t - 1 - \qk$, there is only one zero of $N_1$,
  namely $\qi \in \SPQ$. Right division of $P_1$ by $t-\qi$ yields the
  factorization $P_1 = (t-\qj) (t-\qi)$. The same result can be obtained by
  computing $\qj \in \SPQ$ as the common left zero of $R_2 = \rem(P_1,N_2)$
  and~$N_2$.
\end{example}

The only case we did not discuss yet is when the coefficients of $R$ are
linearly dependent. We can write $R = \lambda r t + \mu r = (\lambda t + \mu) r
\in \SPQ[t]$ for some $\lambda$, $\mu \in \R$ and $r \in \SPQ$. Assume that
$R\Cj{R} \neq 0$, then $r\Cj{r} \neq 0$. Since $N$ divides $R\Cj{R} = (\lambda t
+ \mu)^2 r\Cj{r}$ we infer that $N$ and $R$ have the common real zero
$-\frac{\mu}{\lambda}$. But then $P$ has a real zero which is a contradiction to
our assumption of $P$ being reduced. Hence, the case that $R$ has linearly
dependent coefficient can only appear if $R\Cj{R} = 0$. Then $R$ does not
parametrize a null line but only a point $[r] \in \N$.

\begin{example}\label{ex:constant-remainder-polynomial}
  Consider the polynomial $P_2 = t^2 + \qk$. Its norm polynomial factors as
  $P_2\Cj{P_2} = N_1 N_2 = (t^2+1)(t^2-1)$. The respective remainder polynomials
  $R_1 = \rem(P_2,N_1) = -1+\qk$ and $R_2 = \rem(P_2,N_2) = 1+\qk$ do not have a
  zero in $\SPQ$. A straightforward computation shows that $P_2$ does neither
  have a right zero nor a left zero and thus does not admit a factorization into
  linear factors.
\end{example}

The next lemma shows that only remainder polynomials of a certain type can occur in
the presence of linear right factors.

\begin{lem}\label{lem:right-factor-no-left-ruling}
  Consider a reduced, monic split quaternion polynomial $P \in \SPQ[t]$. Let
  $t-h \in \SPQ[t]$ be a left/right factor of $P$ and let $N = (t-h)(t-\Cj{h}) \in
  \R[t]$. Then the remainder polynomial $R = \rem(P,N)$ parametrizes either a
  non-null line or a left/right ruling of $\N$.
\end{lem}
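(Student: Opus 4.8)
The plan is to argue by contradiction: suppose $t-h$ is (say) a right factor of $P$, so $P = P'(t-h)$, and suppose the remainder $R = \rem(P,N)$ with $N = (t-h)(t-\Cj{h})$ parametrizes a \emph{left} ruling of $\N$ (the only possibility we need to exclude, since "non-null line" and "left/right ruling" are the three alternatives from Lemma~\ref{lem:null-lines}, and a single point is excluded by the discussion preceding Example~\ref{ex:constant-remainder-polynomial}). First I would record that $h$ is a common right zero of $N$ and of $R$: indeed $h$ is a right zero of $P$ by Lemma~\ref{lem:zero-factor}, it is a right zero of $N$ since $N(h) = (h-h)(h-\Cj{h}) = 0$ by the commutativity convention on $t$ — wait, one must be slightly careful, $N(h) = h^2 + bh + c$ with $b = -(h+\Cj{h})$, $c = h\Cj{h}$ real, so $N(h) = h^2 - (h+\Cj{h})h + h\Cj{h} = h^2 - h^2 - \Cj{h}h + h\Cj{h} = 0$ using $\Cj{h}h = h\Cj{h} \in \R$ — and consequently $h$ is a right zero of $R = P - QN$ where $Q = \quo(P,N)$.

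The heart of the argument is then to derive a contradiction from the assumption that $R$ parametrizes a left ruling while still possessing the right zero $h$. Since $R = r_1 t + r_0$ parametrizes a left ruling of $\N$, we have $R\Cj{R} = 0$ and, by Definition~\ref{def:left-right-ruling}, $g\Cj{k} = 0$ for all $[g],[k]$ on this ruling; in particular $r_0 \Cj{r}_1 = 0$ and $r_1\Cj{r}_1 = r_0\Cj{r}_0 = 0$. The existence of a right zero $h$ means $r_1 h + r_0 = 0$, i.e. $r_0 = -r_1 h$. I would then substitute this into the left-ruling condition. From $r_0 \Cj{r}_1 = 0$ we get $r_1 h \Cj{r}_1 = 0$, and more usefully I would exploit that the whole line $[r_0]\vee[r_1]$ is a left ruling: for the generic point $[R(t)] = [r_1 t + r_0] = [r_1(t - h)]$ (using $r_0 = -r_1 h$ and that $t$ is central), and for $[r_1]$ itself, the left-ruling relation gives $r_1(t-h)\Cj{r}_1 = 0$ for all $t$, hence $r_1\Cj{r}_1 = 0$ (already known) and $r_1 h \Cj{r}_1 = 0$. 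Since $[r_0]\vee[r_1]$ is a left ruling through $[r_1]$, Theorem~\ref{th:left-right-ruling} identifies it as $\mathcal{L} = \{[g] : g\Cj{r}_1 = 0\}$; but a right zero on this line would force $r_1(t_0 - h) = 0$ for the corresponding parameter, i.e. the point degenerates to $0$, whereas Corollary~\ref{cor:left-right-ruling} and the structure of Clifford translations show a left ruling through $[r_1]$ is the image of the singular right-annihilator behaviour — the cleaner route is to observe that being simultaneously a right zero of $R$ forces $h$ to satisfy $r_1 h = -r_0$ with $[r_0],[r_1]$ on a common left ruling, and then the reflection-style computation from the last paragraph of the proof of Lemma~\ref{lem:null-line-remainder-polynomial} (replacing the Euclidean common-normal argument, or more directly using $r_1\Cj{r}_1 = 0$ together with $r_0\Cj{r}_1 = 0$) yields linear dependence of $r_0$ and $r_1$, contradicting that they span a ruling. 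If instead $R\Cj{R} \neq 0$, then by the remark preceding the statement $R$ parametrizes a non-null line and there is nothing to prove.

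I expect the main obstacle to be the case analysis on $R$: one must cleanly separate (i) $R\Cj{R}\neq 0$, where $R$ automatically parametrizes a non-null line (since a line through two points of $\N$ lying on $\N$ would have $R\Cj{R}=0$ by Lemma~\ref{lem:null-lines}), from (ii) $R\Cj{R}=0$ with independent coefficients (the left/right ruling dichotomy, where one must rule out the "wrong" ruling type), from (iii) $R\Cj{R}=0$ with dependent coefficients (excluded because $P$ is reduced, exactly as in the paragraph before Example~\ref{ex:constant-remainder-polynomial}). The genuinely delicate point is (ii): showing that a left ruling cannot carry the right zero $h$. The conceptual reason is that, by Theorem~\ref{th:left-right-ruling}, a left ruling through $[r_1]$ consists of the $[g]$ with $g\Cj{r}_1 = 0$, and the map $x \mapsto r_1 x$ restricted to such a line is not injective in the way needed for a unique right zero to sit on it — more precisely, a right zero of $R$ on the line would be a point $[k]$ with $r_1 k = -r_0$, and since $[r_0]$ is on the left ruling we have $r_0\Cj{r}_1 = 0$, so $r_1 k \Cj{r}_1 = 0$; combined with $r_1\Cj{r}_1 = 0$ and the kinematic identity $(r_1 k)\Cj{(r_1 k)} = r_1\Cj{r}_1\, k\Cj{k} = 0$, one pushes through the same linear-dependence contradiction as in Lemma~\ref{lem:null-line-remainder-polynomial}. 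I would write this out symmetrically for the left-factor case by conjugating/transposing the roles of left and right throughout.
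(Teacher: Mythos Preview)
Your overall scaffold matches the paper's: show $h$ is a right zero of $R$ (since it is a right zero of both $P$ and $N$, hence of $R = P - QN$), then case-split on the type of line that $R$ parametrizes. But the two non-trivial cases are not closed.

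\textbf{Case $R\Cj{R}=0$ with independent coefficients.} You circle around the key point without landing it. The paper's argument here is a single application of Corollary~\ref{cor:left-right-ruling}: from $-r_0 = r_1 h$ and $[r_1] \in \N$, that corollary (second bullet, with $r_1$ in the role of the null point and $h$ in the role of $g$) gives directly that $[r_1] \vee [r_1 h] = [r_1] \vee [r_0]$ is a \emph{right} ruling. No contradiction argument is needed. Your route via ``$r_1 h \Cj{r}_1 = 0$'' and a ``reflection-style computation from the last paragraph of the proof of Lemma~\ref{lem:null-line-remainder-polynomial}'' is not completed, and the identity $r_1 h \Cj{r}_1 = 0$ by itself does not force linear dependence of $r_0$, $r_1$. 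If you really want a contrapositive formulation, the clean one is: $r_0 = -r_1 h$ together with $\Cj{r}_1 r_1 = 0$ gives $\Cj{r}_1 r_0 = 0$, so $[r_0]$ lies on the \emph{right} ruling through $[r_1]$; since by Theorem~\ref{th:left-right-ruling} the left and right rulings through $[r_1]$ are distinct, $[r_0]\vee[r_1]$ cannot be the left ruling unless $[r_0]=[r_1]$.

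\textbf{Case $R\Cj{R}=0$ with dependent coefficients.} Your appeal to ``the paragraph before Example~\ref{ex:constant-remainder-polynomial}'' is misplaced. That paragraph only shows that dependent coefficients force $R\Cj{R}=0$; it does \emph{not} exclude the single-point case---indeed Example~\ref{ex:constant-remainder-polynomial} exhibits precisely this situation (with no right factor present). The paper supplies a fresh argument here that uses the right zero $h$: writing $R = (\lambda t + \mu) r$, the relation $0 = R(h) = r(\lambda h + \mu)$ makes $\lambda h + \mu$ a zero divisor; if $\lambda = 0$ this forces $R=0$ and $P=QN$ is not reduced, while if $\lambda \neq 0$ one computes $(h+\tfrac{\mu}{\lambda})(\Cj{h}+\tfrac{\mu}{\lambda}) = N(-\tfrac{\mu}{\lambda}) = 0$, so $P$ has the real zero $-\tfrac{\mu}{\lambda}$, again contradicting reducedness. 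This step is genuinely needed and is not covered by the earlier discussion you cite.
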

\begin{proof}
  We only prove the statement for right factors. If $R\Cj{R} \neq 0$, the
  coefficients of $R$ are linearly independent by the considerations following
  Example~\ref{ex:null-line-remainder-polynomial-2}. Moreover, $R$ parametrizes
  a non-null line by Lemma~\ref{lem:null-lines}.

  Next, we assume $R\Cj{R} = 0$. The split quaternion $h \in \SPQ$ is a right
  zero of $N$ as well as of $P$ by Lemma~\ref{lem:zero-factor}. Define $Q =
  \quo(P,N)$, then $h$ is a right zero of $R = P - QN = r_1 t + r_0$, i.\,e.
  $-r_0 = r_1 h$. Provided $r_0$ and $r_1$ are linearly independent, the points
  $[r_0]$ and $[r_1]$ are well defined and, by
  Corollary~\ref{cor:left-right-ruling}, span a right ruling. We proceed by
  showing linear independence of $r_0$ and $r_1$. Assume, on the contrary, that
  $R = \lambda r t + \mu r \in \SPQ[t]$ for some $\lambda$, $\mu \in \R$ and $r
  \in \SPQ \setminus \{0\}$. We have $0 = R(h) = (\lambda r h + \mu r) = r
  (\lambda h + \mu)$ and $\lambda h + \mu$ is a zero divisor. If $\lambda = 0$
  this implies that $\mu = 0$ and moreover $R = 0$. Consequently, $P = QN$ is
  not reduced, which is a contradiction to our assumption. Hence, $\lambda \neq
  0$ and $h + \frac{\mu}{\lambda}$ is a zero divisor, i.\,e. $0 = (h +
  \frac{\mu}{\lambda}) \Cj{(h + \frac{\mu}{\lambda})} = (h +
  \frac{\mu}{\lambda}) (\Cj{h} + \frac{\mu}{\lambda}) =
  N(-\frac{\mu}{\lambda})$. Again, we obtain a contradiction since we infer that
  $P = QN + R$ has the real zero $-\frac{\mu}{\lambda}$ and is not reduced.
\end{proof}

Assume that $P$ admits a factorization into linear factors $P = (t-h_1) \cdots
(t-h_n)$. The rightmost factor $t-h_n$ can be computed by using the real
polynomial $N = (t-h_n)(t-\Cj{h}_n)$ and $R = \rem(P,N)$. By
Lemma~\ref{lem:right-factor-no-left-ruling}, $R$ parametrizes either a non-null
line and we run one iteration of Algorithm~\ref{alg:generic-algorithm}, or $R$
parametrizes a right ruling of $\N$ and we compute $t-h_n$ as explained in
Remark~\ref{rem:null-line-remainder-polynomial}. The complete factorization can
be obtained by an iterative procedure using the real polynomials
$(t-h_{n-1})(t-\Cj{h}_{n-1}),\ldots,(t-h_1)(t-\Cj{h}_1) \in \R[t]$.

\begin{algorithm}
  \caption{$\mathtt{NGFactor}$ (Non-Generic Factorization Algorithm)}\label{alg:non-generic-algorithm}
  \begin{algorithmic}[1]
    \Require $(P,F)$, where $P \in \SPQ[t]$ is a reduced, monic, split quaternion polynomial of
    degree $n \geq 1$ and $F = (F_1,\ldots,F_n)$ is a tuple of quadratic real
    polynomials such that $P\Cj{P} = F_1 \cdots F_n$.
    \Ensure A tuple $L = (L_1, \ldots, L_n)$ of linear split quaternion
    polynomials such that $P = L_1 \cdots L_n$ and $L_i\Cj{L}_i = F_i$ for $i = 1,\ldots,n$.
    \Statex
    \If{$\deg(P) = 0$}
    \State \Return ()
    \EndIf
    \State $N \leftarrow F_n$, $F \leftarrow (F_1,\ldots,F_{n-1})$
    \State $R = r_1t + t_0 \leftarrow \rem(P,N)$
    \If{$R\Cj{R} \neq 0$}
    \State $h \leftarrow$ unique right zero of $R$
    \State $P \leftarrow \lquo(P,t-h)$
    \State \Return $\mathtt{NGFactor}(P,F) \oplus (t-h)$
    \ElsIf{$r_1 \Cj{r}_0 \neq 0$} \Comment $[r_0] \vee [r_1]$ does not lie on a left ruling
    \State $h \leftarrow$ unique common right zero of $N$ and $R$
    \Comment{compute via Remark~\ref{rem:null-line-remainder-polynomial}}
    \State $P \leftarrow \lquo(P,t-h)$
    \State \Return $\mathtt{NGFactor}(P,F) \oplus (t-h)$
    \Else
    \State \Return No factorization with respect to $F$ exists.
    \EndIf
  \end{algorithmic}	 	
\end{algorithm}

These considerations are transformed into
Algorithm~\ref{alg:non-generic-algorithm} for computing factorizations also in
non-generic cases. It attempts to recursively compute a factorization from a
given sequence $(F_1,\ldots,F_n)$ of quadratic real factors of the norm
polynomial $P\Cj{P}$. In doing so, it only tries to find \emph{right} factors.
If this fails, it stops and returns a factorization of a right factor. The next
theorem justifies the restriction to right factors only.

\begin{thm}\label{th:alg-finds-factorization}
  A reduced, monic split quaternion polynomial $P \in \SPQ[t]$ of degree $n \ge
  1$ admits a factorization into linear factors if an only if
  Algorithm~\ref{alg:non-generic-algorithm} finds a factorization for some tuple
  $F = (F_1,\ldots,F_n)$ consisting of quadratic real polynomials such that
  $P\Cj{P} = F_1 \cdots F_n$.
\end{thm}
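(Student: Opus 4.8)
The plan is to prove both implications. The ``only if'' direction is the substantive one; the ``if'' direction is almost immediate.

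First I would handle the easy direction. If Algorithm~\ref{alg:non-generic-algorithm} returns a tuple $L = (L_1,\ldots,L_n)$ for some admissible $F$, then by construction each recursive call either peels off a linear right factor $t-h$ obtained from the generic step (when $R\Cj{R}\neq 0$) or from the common zero of $N$ and $R$ (when $R\Cj{R}=0$ but $r_1\Cj{r}_0\neq 0$). In both cases $t-h$ is genuinely a right factor of the current polynomial $P$, by Lemma~\ref{lem:zero-factor} together with Lemma~\ref{lem:null-line-remainder-polynomial} in the second case. Moreover the quotient $\lquo(P,t-h)$ is again monic, of degree one less, and reduced (it cannot acquire a real or complex polynomial factor, since such a factor would divide $P$ as well). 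So the recursion is well-posed and on return we obtain $P = L_1\cdots L_n$, a factorization into linear factors.

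For the ``only if'' direction, assume $P$ admits some factorization $P = (t-h_1)\cdots(t-h_n)$ into linear factors. I would set $F_i \coloneqq (t-h_i)(t-\Cj{h}_i) \in \R[t]$ for $i=1,\ldots,n$; then $P\Cj{P} = F_1\cdots F_n$, so $F = (F_1,\ldots,F_n)$ is an admissible input. The claim is that Algorithm~\ref{alg:non-generic-algorithm} run on $(P,F)$ never reaches the failure branch and hence returns a factorization. This is proved by induction on $n$, the base case $n=0$ being trivial. For the inductive step, the algorithm sets $N = F_n = (t-h_n)(t-\Cj{h}_n)$ and $R = \rem(P,N)$. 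Since $t-h_n$ is by hypothesis a right factor of $P$, Lemma~\ref{lem:right-factor-no-left-ruling} applies: $R$ parametrizes either a non-null line or a right ruling of $\N$ — it can \emph{not} parametrize a left ruling, and the degenerate ``single point'' case is likewise excluded, because (as noted in the discussion following Example~\ref{ex:constant-remainder-polynomial}) linearly dependent coefficients of $R$ together with $P$ reduced force $R\Cj{R}=0$, and then $[r_0]=[r_1]$ on $\N$; but Lemma~\ref{lem:right-factor-no-left-ruling}'s proof shows precisely that $r_0, r_1$ are linearly independent when $t-h_n$ is a right factor. Thus exactly one of the first two branches of the algorithm is taken: if $R\Cj{R}\neq 0$ we are in the non-null line case and the generic step extracts the unique right zero $h$ of $R$, which — since $N \mid R\Cj{R}$ and $\deg R\Cj{R} \le 2$ — is also the common right zero of $N$ and $R$, hence equal to $h_n$; if $R\Cj{R}=0$ then $r_1\Cj{r}_0\neq 0$ (right ruling, not left) and Lemma~\ref{lem:null-line-remainder-polynomial} gives the unique $h=h_n$ with $t-h$ a right factor and $N=(t-h)(t-\Cj{h})$. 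In either case the algorithm computes $h=h_n$, sets $P' = \lquo(P,t-h_n) = (t-h_1)\cdots(t-h_{n-1})$, which is reduced, monic, of degree $n-1$, and still factors into linear factors with $P'\Cj{P'} = F_1\cdots F_{n-1}$. By the induction hypothesis $\mathtt{NGFactor}(P', (F_1,\ldots,F_{n-1}))$ returns a factorization, and appending $(t-h_n)$ yields a factorization of $P$.

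The main obstacle, and the point deserving the most care, is the verification that the failure branch cannot be triggered: one must rule out both $R$ parametrizing a left ruling and $R$ parametrizing a single point of $\N$. Both are exactly what Lemma~\ref{lem:right-factor-no-left-ruling} (and its proof, establishing linear independence of $r_0,r_1$) is designed to exclude, so the argument reduces to citing that lemma correctly; the remaining work — identifying the extracted zero with $h_n$, and checking that reducedness and monicity are inherited by the quotient — is routine. One subtlety worth a sentence is that the tuple $F$ is \emph{not} unique: different orderings of the same quadratic factors, or different groupings of the roots of $P\Cj{P}$ into conjugate pairs, give different admissible $F$, and the theorem only asserts existence of \emph{some} $F$ that works, which is exactly what the construction $F_i = (t-h_i)(t-\Cj{h}_i)$ provides.
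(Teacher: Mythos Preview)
Your proposal is correct and follows essentially the same approach as the paper: the ``if'' direction by correctness of each step of the algorithm, and the ``only if'' direction by choosing $F_i = (t-h_i)(t-\Cj{h}_i)$ from an existing factorization and invoking Lemma~\ref{lem:right-factor-no-left-ruling} to exclude the failure branch. You spell out the induction and the identification $h = h_n$ more explicitly than the paper (which merely refers back to the discussion after Lemma~\ref{lem:right-factor-no-left-ruling}), but the argument is the same.
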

\begin{proof}
  We have already argued the correctness of
  Algorithm~\ref{alg:non-generic-algorithm} so that a factorization exists, if
  the algorithm does not stop prematurely. On the other hand, the considerations
  after the proof of Lemma~\ref{lem:right-factor-no-left-ruling} show that
  Algorithm~\ref{alg:non-generic-algorithm} will recursively find the linear
  factors of the factorizations $P = (t-h_1) \cdots (t-h_n)$ for the input $F =
  (F_1,\ldots,F_n)$ with $F_i = (t-h_i)(t-\Cj{h}_i)$, $i \in \{1,\ldots,n\}$.
\end{proof}

\begin{rmk}
  Algorithm~\ref{alg:non-generic-algorithm} relies on the possibility to compute
  factorizations of the real norm polynomial $P\Cj{P}$. In general, this
  requires numerical approximation which is not naturally incorporated into our
  otherwise symbolic algorithm. Nonetheless, it should be considered. We believe
  that this should be possible in a satisfactory way but refrain from pursuing
  this question in this article.
\end{rmk}

\begin{example}\label{ex:null-line-remainder-polynomial-3}
  Once more, we consider the polynomial $P_1 = t^2 - (\qi + \qj) t - \qk$ from
  Example~\ref{ex:null-line-remainder-polynomial-1}. As we have seen in
  Example~\ref{ex:null-line-remainder-polynomial-2},
  Algorithm~\ref{alg:non-generic-algorithm} computes a factorization for the
  input $(P,(N_2,N_1))$. It will fail for the input $(P,(N_1,N_2))$ because $R_2
  = -(\qi + \qj) t - 1 - \qk$ parametrizes a left ruling of $\N$ and thus has no
  right zeros. Of course, one could device a left version of
  Algorithm~\ref{alg:non-generic-algorithm} as well.
\end{example}

\begin{example}
  For every integer $n > 1$, there exist polynomials of degree $n$ without
  linear factors. One example is $t^n + \qi + \qk$. Its norm polynomial equals
  $t^{2n}$. Thus, the only quadratic factor to use in
  Algorithm~\ref{alg:non-generic-algorithm} is $t^2$. We have $\rem(t^n + \qi +
  \qk,t^2) = \qi + \qk$ -- a constant polynomial which does not have any zero.
\end{example}

\subsection{Geometry of the Factorization Algorithm}
\label{sec:geometry-of-algorithm}

In this section we will investigate the ``geometry'' of
Algorithm~\ref{alg:non-generic-algorithm} in order to clarify the cause of
non-factorizability. By Theorem~\ref{th:alg-finds-factorization}, a reduced,
monic split quaternion polynomial $P$ does not admit a factorization into linear
factors if and only if the algorithm fails for any order of the quadratic real
factors $F_i$. Let $P = P' (t-h_m)\cdots(t-h_n)$ for some $m \in \{1,\ldots,n\}$
and a polynomial $P' \in \SPQ[t]$ that does not have a linear right factor.
Moreover, let $N \in \R[t]$ be a quadratic factor of $P'\Cj{P'}$. According to
Lemma~\ref{lem:null-line-remainder-polynomial} and the considerations in
Section~\ref{sec:state-of-the-art}, $R = \rem(P',N)$ can not have a right zero
and must either parametrize a left ruling of $\N$ or a point $[r] \in \N$, i.e.
$[R(t)] = [r] \in \N$ for all $t \in \R \cup \{ \infty \}$. Let $t_1$, $t_2 \in
\C$ be such that $N = (t-t_1)(t-t_2)$. For the time being we assume that $t_1$,
$t_2 \in \R$.
For $i = 1,2$ we have
\begin{align*}
  P(t_i) = P'(t_i) (t_i-h_m)\cdots(t_i-h_n)
\end{align*}
and the point $[P(t_i)] \in \N$ is the image of $[P'(t_i)] \in \N$ under the
Clifford right translation $[x] \mapsto [x (t_i-h_m)\cdots(t_i-h_n)]$, c.\,f.
Corollary~\ref{cor:left-right-ruling} and Remark~\ref{rmk:clifford-translation}.
The point $[P'(t_i)]$ is contained in the pre-image of $[P(t_i)]$ if the
Clifford right translation is singular and equals $[P(t_i)]$ otherwise.
\emph{The failure of Algorithm~\ref{alg:non-generic-algorithm} is equivalent to
  the line $[P(t_1)] \vee [P(t_2)]$ being contained in the pre-image of
  $[P'(t_1)] \vee [P'(t_2)]$, which is either a left ruling of $\N$ or
  degenerates to a single point $[r] \in \N$.}

\begin{example}\label{ex:algorithm-stops-1}
  Consider the polynomial $P_3 = t^3 - \qi t^2 + \qk t - \qj$. The norm
  polynomial is $P_3 \Cj{P_3} = (t^2+1)^2 (t^2-1)$ and we can run
  Algorithm~\ref{alg:non-generic-algorithm} with the following triples:
  \begin{align*}
    (t^2-1,t^2+1,t^2+1), \ (t^2+1,t^2-1,t^2+1), \ (t^2+1,t^2+1,t^2-1).
  \end{align*}
  The latter one yields the remainder polynomial $R_1 = \rem(P_3,t^2-1) =
  (1+\qk) t - \qi - \qj$ and the right factor $t-\qj$ and $P_3' =
  \lquo(P_3,t-\qj) = t^2 + (\qj - \qi) t + 1$ in the first iteration. In the
  second iteration we obtain the remainder polynomial $\rem(P_3',t^2+1) = (\qj -
  \qi) t$ which parametrizes only the point $[\qj - \qi] \in \N$ and the
  algorithm stops. The polynomial $R_1$ parametrizes a right ruling of $\N$
  which is contained in the pre-image of $[\qj - \qi]$.

  Both of the other two triples yield the remainder polynomial $R_2 =
  \rem(P_3,t^2+1) = (\qk-1) t + \qi - \qj$ and the right factor $t-\qi$ and
  $P_3' = \lquo(P_3,t-\qi) = t^2+\qk$ in the first iteration. We have already
  seen in Example~\ref{ex:constant-remainder-polynomial} that $t^2+\qk$ does not
  admit a factorization because both remainder polynomials $\rem(P_3',t^2-1) = 1
  + \qk$ and $\rem(P_3',t^2+1) = -1 + \qk$ of the second iteration parametrize
  only a point on $\N$. The right ruling of $\N$ parametrized by $R_2$ is
  contained in the pre-image of $[-1+\qk]$ and $[1+\qk]$, respectively, with
  respect to the according Clifford right translations.

  Hence, Algorithm~\ref{alg:non-generic-algorithm} fails for all triples of
  quadratic real factors and $P_3$ does not admit a factorization.
\end{example}

If $t_1 = \overline{t}_2 \in \C \setminus \R$, then the coefficients of $R(t_1)$
and $R(t_2)$ might be complex numbers. However, all definitions and results from
Section~\ref{sec:split-quaternions-and-geometry} can be translated to the ring
of split quaternions with coefficients in $\C$. This has been done in
\cite{siegele20} with the Hamiltonian quaternions. Considering complex numbers
as coefficients they are isomorphic to the (complex) split quaternions. In
addition, we extend the definition of the null cone $\NC$ as well as the null
quadric $\N$ by also considering the split quaternions with complex coefficients
fulfilling their defining equations, respectively. Note that the affine plane in
Theorem~\ref{th:affine-two-plane-of-zeros} extends to an affine plane of
\emph{complex} dimension two in this setup.

If a split quaternion polynomial admits a factorization into linear factors then
it must have a linear left/right factor and therefore a left/right zero by
Lemma~\ref{lem:zero-factor}. Both potential remainder polynomials of $P_3$ in
Example~\ref{ex:algorithm-stops-1} $\rem(P_3,t^2-1) = (1+\qk) t - \qi - \qj$ and
$\rem(P_3,t^2+1) = (\qk-1) t + \qi - \qj$ parametrize right rulings of $\N$.
Hence, $P_3$ has no left zeros and we could state that is does not admit a
factorization without the necessity to run
Algorithm~\ref{alg:non-generic-algorithm} with all triples of quadratic real
factors of $P_3 \Cj{P_3}$. This result can be easily generalized.

\begin{thm}\label{th:no-factorization-exists}
  Consider a reduced, monic split quaternion polynomial $P \in \SPQ[t]$. If all
  sets $\{ [R(t)] \subset \P(\SPQ) \colon t \in \R \cup \{ \infty \} \}$ parametrized by its
  remainder polynomials after division by quadratic real factors of $P\Cj{P}$
  belong to only on family of rulings of $\N$, then $P$ does not admit a
  factorization into linear factors.
\end{thm}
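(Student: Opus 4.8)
The plan is to argue contrapositively: assume $P$ admits a factorization into linear factors and show that the remainder polynomials after division by quadratic real factors of $P\Cj{P}$ cannot all parametrize lines (or points) from a single family of rulings. By Definition~\ref{def:factorization} and Lemma~\ref{lem:zero-factor}, a factorization $P = (t-h_1)\cdots(t-h_n)$ produces both a linear right factor $t-h_n$ (hence a right zero of $P$) and, by the mirror construction reading the product from the left, a linear left factor $t-g_1$ where $g_1 = h_1$ (hence a left zero of $P$). So if $P$ is factorizable, it has \emph{both} a left zero and a right zero. The key is then to connect the existence of a left zero to a remainder polynomial that parametrizes something in the left family, and the existence of a right zero to a remainder polynomial in the right family — so that factorizability forces the remainder polynomials to meet \emph{both} families, contradicting the hypothesis.

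First I would invoke Lemma~\ref{lem:right-factor-no-left-ruling}. For the right factor $t-h_n$, setting $N = (t-h_n)(t-\Cj{h}_n)$, that lemma tells us $R = \rem(P,N)$ parametrizes either a non-null line or a \emph{right} ruling of $\N$. The left version of the same lemma (stated parenthetically there) applied to the left factor $t-g_1$ with $N' = (t-g_1)(t-\Cj{g}_1)$ gives that $\rem(P,N')$ parametrizes either a non-null line or a \emph{left} ruling. Now I handle the non-null cases: if $R = \rem(P,N)$ parametrizes a non-null line, then $R\Cj{R} \neq 0$ by Lemma~\ref{lem:null-lines}, so $R$ has a unique right zero, and the hypothesis that $\{[R(t)]\}$ belongs to only one family of rulings must be interpreted carefully — the point is that in this situation $R$ still has a right zero, which already means Algorithm~\ref{alg:non-generic-algorithm} proceeds, so the "belongs to one family" hypothesis is really a statement about the null (ruling) cases. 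I would either restrict attention to the genuinely degenerate situation (all remainder polynomials parametrize null lines or points) or observe that a non-null line is not on any ruling at all, so "all belong to one family of rulings" forces every remainder polynomial to be of ruling type. Under that reading, $\rem(P,N)$ must be a right ruling and $\rem(P,N')$ must be a left ruling, so the two families are both represented — contradiction.

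The main obstacle I anticipate is the precise formulation of "belong to only one family of rulings" when some remainder polynomials parametrize non-null lines (which lie on no ruling) or a single point of $\N$ (which lies on \emph{one} ruling of each family). For a single point $[r]\in\N$, Theorem~\ref{th:left-right-ruling} shows there is one left ruling and one right ruling through it, so a point-type remainder is ambiguous and does not by itself violate a "one family" condition; the real content must be that every \emph{line}-type remainder lies in the same family. So I would phrase the argument as: if $P$ is factorizable, then (using the left and right factors and Lemma~\ref{lem:right-factor-no-left-ruling} in both versions) there is a quadratic real factor of $P\Cj{P}$ whose remainder polynomial parametrizes a right ruling and another whose remainder polynomial parametrizes a left ruling — unless some remainder polynomial parametrizes only a point, in which case I would fall back on Lemma~\ref{lem:null-line-remainder-polynomial} and Remark~\ref{rem:null-line-remainder-polynomial} to show the corresponding factor can still be extracted, again contradicting non-extractability. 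Carefully disentangling these degenerate sub-cases, rather than any hard computation, is where the work lies; once the bookkeeping is set up, each case closes by the results already established.
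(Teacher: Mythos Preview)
Your core idea is right and matches the paper's: Lemma~\ref{lem:right-factor-no-left-ruling} ties the existence of a right (resp.\ left) factor to a remainder that is either a non-null line or a right (resp.\ left) ruling, and the hypothesis rules out non-null lines, so a factorization would force remainders in both ruling families. The paper, however, executes this in two lines by a symmetry you don't exploit: $P$ factors if and only if $\Cj{P}$ does, and conjugation swaps the two ruling families. Hence one may assume without loss of generality that all remainder polynomials parametrize subsets of \emph{left} rulings; then Lemma~\ref{lem:right-factor-no-left-ruling} immediately gives that $P$ has no linear right factor, and we are done. This sidesteps entirely the left/right bookkeeping and the case $N = N'$ that your version must still address.

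Your anticipated obstacle about point-type remainders is a non-issue. Lemma~\ref{lem:right-factor-no-left-ruling} already proves that when $t-h$ is a right (or left) factor, the coefficients of $R = \rem(P,N)$ are linearly independent, so $R$ parametrizes a genuine line, never a single point. Thus the ``ambiguous point'' case simply does not arise from a factor, and your proposed fallback via Lemma~\ref{lem:null-line-remainder-polynomial} and Remark~\ref{rem:null-line-remainder-polynomial} is unnecessary. Once you drop that detour and invoke the conjugation symmetry, your argument collapses to the paper's.
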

\begin{proof}
  $P$ admits a factorization into linear factors if and only if $\Cj{P}$ does.
  Moreover, conjugation of split quaternions interchanges the two families of
  rulings of $\N$. Hence, we can, without loss of generality, assume that all
  remainder polynomials of $P$ parametrize (subsets of) left rulings of $\N$.
  Then $P$ has no linear right factor by
  Lemma~\ref{lem:right-factor-no-left-ruling} and therefore does not admit a
  factorization into linear factors.
\end{proof}

\subsection{Factorizing Hyperbolic Motions}
\label{sec:factorizing-motions}

Since there exist split quaternions which do not admit a factorization into
linear factors we can not unconditionally use the standard factorization theory
in order to factor motions described by such polynomials. However, the
projective nature of our kinematic model allows to multiply a split quaternion
polynomial $P$ by a real polynomial $T \in \R[t]$. The product $TP$ describes
the same motion as $P$ and may admit a factorization into linear factors. In
this case, we can still decompose the initial motion into rotations even though
$P$ does not admit a factorization. In the following we will show that such a
polynomial $T$ always exists.

\begin{thm}\label{th:multiplication-trick}
  Consider a reduced, monic split quaternion polynomial $P \in \SPQ[t]$. There
  exists a real polynomial $T \in \R[t]$ such that $TP$ admits a factorization
  into linear factors.
\end{thm}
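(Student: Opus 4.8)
The plan is to produce $T$ by repeatedly applying the ``degree-elevation'' idea from the Euclidean setting \cite{li15b}, but guided by the geometric picture developed in Section~\ref{sec:geometry-of-algorithm}. Concretely, I would argue by induction on $\deg(P)$. If $\deg(P)=1$ there is nothing to do, so assume $\deg(P)=n\ge 2$. Pick a quadratic real factor $N$ of $P\Cj{P}$ and form $R=\rem(P,N)$. If $R$ has a right zero $h$ (the non-null or right-ruling cases of Lemma~\ref{lem:right-factor-no-left-ruling} and Lemma~\ref{lem:null-line-remainder-polynomial}), then $P=P'(t-h)$ with $P'$ reduced and monic of degree $n-1$, and the inductive hypothesis gives a real $T$ with $TP'$ factorizable; then $TP=(TP')(t-h)$ is factorizable and we are done. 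So the only obstruction is the bad case: $R$ parametrizes a left ruling of $\N$, or degenerates to a single point $[r]\in\N$, for \emph{every} quadratic real factor $N$ of $P\Cj{P}$ — equivalently, by Theorem~\ref{th:no-factorization-exists}, the situation in which all remainder polynomials belong to the left family.

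The key step is to show that in this obstructed case one can multiply $P$ by a single well-chosen quadratic (or linear) real polynomial $T_0$ so that the new polynomial $T_0P$ \emph{does} admit a linear right factor; then one finishes by induction as above. Here is how I would construct $T_0$. Fix a quadratic real factor $N=(t-t_1)(t-t_2)$ of $P\Cj{P}$ with $R=\rem(P,N)$ parametrizing a left ruling $\mathcal{L}$ of $\N$ (the degenerate single-point case is handled the same way, even more easily, since a point of $\N$ lies on a left ruling). By Theorem~\ref{th:left-right-ruling}, a left ruling consists of all $[g]$ with $g\Cj{h}=0$ for the fixed $[h]$, so $\mathcal{L}$ is determined by a vectorial null direction. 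The idea is to ``rotate'' $P$ into the right family. I would look for a linear factor $t-q$ with $q\Cj{q}\in\R$ acting on the right so that right-multiplying the whole line $[P(t_1)]\vee[P(t_2)]$ by $(t-q)$ moves it off its current left ruling; one natural candidate is a factor coming from the reversal/conjugate polynomial $\Cj{P}$, which by the argument in Theorem~\ref{th:no-factorization-exists} \emph{does} have a right factor $t-g$ (its remainder polynomials parametrize right rulings). Concretely, since $\Cj{P}$ has a linear right factor $t-g$, the polynomial $\Cj{P}=P''(t-g)$, hence $P=(t-\Cj{g})\Cj{P''}$ has a linear \emph{left} factor; multiplying $P$ on the right by the real polynomial $(t-g)(t-\Cj{g})$ and using $P\cdot(t-g)(t-\Cj{g})=(t-\Cj{g})\Cj{P''}\cdot(t-g)(t-\Cj{g})$ does not directly help, so instead I would take the real polynomial $T_0\coloneqq (t-g)(t-\Cj g)$ and study $T_0P=P(t-g)(t-\Cj g)$ via Corollary~\ref{cor:left-right-ruling}: right-multiplication by $t-g$ maps $[P(t_i)]$ either to itself or along a \emph{right} ruling, and I would show one cannot then re-land on the original left ruling, so that $\rem\big(T_0P,\,N'\big)$ for a suitable quadratic real factor $N'$ of $T_0P\Cj{(T_0P)}=T_0\Cj{T_0}P\Cj P$ parametrizes a non-null line or a right ruling — hence has a right zero, and $T_0P$ has a linear right factor.

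Granting that, the induction closes: $T_0P=(T_0P)'(t-h)$ with $(T_0P)'$ reduced monic of degree $n+1$ — wait, it has degree $n+1$, so I must be slightly careful and instead induct on degree \emph{after} first clearing the obstruction; the clean formulation is to prove the statement by strong induction on $n=\deg P$, where in the obstructed case one writes $T_0P=(t-\ell_0)\,Q\,(t-h)$ with $Q$ reduced monic of degree $n-1$ (the leftmost factor exists because $\Cj P$ has a right factor, equivalently $P$ has a left factor $t-\ell_0$ even in the obstructed case, since Lemma~\ref{lem:right-factor-no-left-ruling} and Theorem~\ref{th:no-factorization-exists} forbid right factors but not left ones), applies the inductive hypothesis to $Q$ to get a real $T_1$ with $T_1Q$ factorizable, and sets $T\coloneqq T_1T_0\in\R[t]$. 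Then $TP=(t-\ell_0)(T_1Q)(t-h)$ is a product of linear factors. The main obstacle I anticipate is precisely the geometric lemma in the previous paragraph: proving that right-multiplication by the real polynomial $T_0=(t-g)(t-\Cj g)$, or by some explicitly chosen real quadratic, genuinely pushes \emph{at least one} remainder polynomial out of the left family. This amounts to showing that the two left rulings through $[P(t_1)]$ and $[P(t_2)]$ cannot both be preserved by every such right translation — a transversality statement about Clifford right translations and the two families of rulings of $\N$, analogous to the non-containment-in-a-tangent-plane argument at the end of the proof of Lemma~\ref{lem:null-line-remainder-polynomial}. Everything else (polynomial division, keeping track of norm polynomials, reducedness after dividing off real factors) is routine bookkeeping.
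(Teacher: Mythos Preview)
Your inductive scaffolding is fine, but the core step---the ``geometric lemma'' you flag as the main obstacle---is not just unproved, it is false as you have set it up. If $T_0\in\R[t]$ and $N=(t-t_1)(t-t_2)$ is a quadratic factor of $P\Cj{P}$ coprime to $T_0$, then $(T_0P)(t_i)=T_0(t_i)P(t_i)$ with $T_0(t_i)\in\R\setminus\{0\}$, so $[(T_0P)(t_i)]=[P(t_i)]$. The remainder $\rem(T_0P,N)$ therefore parametrizes exactly the same projective line (or point) as $\rem(P,N)$; a real multiplier cannot push any of these remainders out of the left family. And for the new factor $N'=T_0$ you get $\rem(T_0P,T_0)=0$, which carries no information. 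So right-multiplication by a real polynomial is geometrically invisible in the sense you need.

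Your fallback---that $P$ at least has a \emph{left} factor in the obstructed case---also fails. The obstructed case includes remainders that degenerate to a single point of $\N$, and then $P$ can have neither left nor right linear factors: $P_2=t^2+\qk$ (Example~\ref{ex:constant-remainder-polynomial}) is exactly this. A point of $\N$ lies on both a left and a right ruling, so you cannot invoke Lemma~\ref{lem:null-line-remainder-polynomial} in either direction. (Also, in your ``clean formulation'' the degree of $Q$ is $n$, not $n-1$, since $\deg(T_0P)=n+2$.)

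The paper's proof repairs precisely this: instead of multiplying by a real $T_0$, it multiplies on the left by a \emph{non-real} linear $H=t-h$ and only afterwards completes to the real $T_1=H\Cj{H}$. The point is that $[(HP)(t_i)]=[H(t_i)P(t_i)]$ is a genuine Clifford left translate of $[P(t_i)]$, so the evaluation points actually move along their left rulings. One then chooses $h$ generically (three explicit open conditions) so that (i) the two points get separated, yielding a left factor $t-h_l$ of $HP$ via $N$, and (ii) after dividing off $t-h_l$, the remaining polynomial acquires a \emph{right} factor from the new quadratic $H\Cj{H}$. This produces $H\Cj{H}\,P=(t-\Cj{h})(t-h_l)P''(t-h_r)$ with $\deg P''=\deg P-1$, and the induction goes through. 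The essential idea you are missing is that the degree elevation must pass through a non-central intermediate step $HP$ to have any geometric effect.
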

\begin{proof}
  If $P$ already admits a factorization we can choose $T = 1$. Assume that $P$
  does not admit a factorization into linear factors. Without loss of generality
  we can assume that $P$ has no right factors. Otherwise we run
  Algorithm~\ref{alg:non-generic-algorithm} with a suitable tuple of quadratic
  real factors of $P\Cj{P}$ and continue with the input polynomial in the
  iteration where the algorithm stops. Let $N = (t-t_1)(t-t_2) \in \R[t]$ be a
  quadratic, real factor of $P\Cj{P}$ with $t_1$, $t_2 \in \C$. By the
  considerations at the beginning of Section~\ref{sec:geometry-of-algorithm},
  $[P(t_1)] \vee [P(t_2)]$ is either a left ruling of $\N$ or $[P(t_1)] =
  [P(t_2)] \in \N$.

  Consider $H = t - h \in \SPQ[t]$ and denote by $z$, $\overline{z} \in \C
  \setminus \R$ the conjugate complex zeros of $H\Cj{H} = (t-z)(t-\overline{z})
  \in \R[t]$. We will later argue that we can select $H$ such that
  \begin{enumerate}
    \item[1.] \label{it1} $P\Cj{P}(z) \neq 0 \neq P\Cj{P}(\overline{z})$,
    \item[2.]\label{it2} $[H(t_1)P(t_1)] \neq [H(t_2)P(t_2)]$,
    \item[3.]\label{it3} $[H(z)P(z)] \neq [H(\overline{z})P(\overline{z})]$ and
      $[H(z)P(z)] \vee [H(\overline{z})P(\overline{z})]$ is no left ruling of~$\N$.
  \end{enumerate}

\begin{figure}
  \centering
  \begin{overpic}[width=0.3\textwidth]{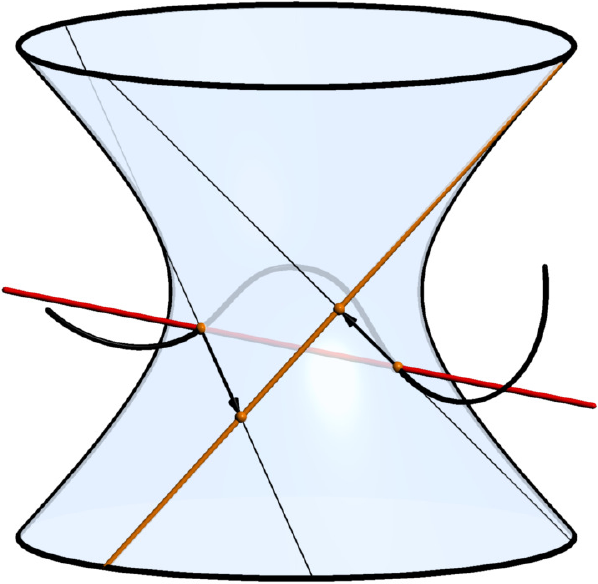}
    \put(90,22){\textcolor{red}{$L$}}
    \put(25,5){\textcolor{orange}{$K$}}
    \put(85,55){$HP$}
  \end{overpic}
    \caption{Illustration for the choice of $H$: The non-null line $L \coloneqq [H(z)P(z)]
      \vee [H(\overline{z})P(\overline{z})]$ is mapped to the right ruling
      $K$ when dividing off a the left factor $t-h_l$. The points
      $[H(z)P(z)]$ and $[H(\overline{z})P(\overline{z})]$ are mapped by the
      respective Clifford left translations.}
    \label{fig:gfigure}
\end{figure}

  By Remark~\ref{rmk:clifford-translation} and Item~2, the points
  $[H(t_1)P(t_1)]$ and $[H(t_2)P(t_2)]$ span the left ruling through $[P_1]$ and
  $[P_2]$. Therefore, $\rem(HP,N)$ yields a left factor $t - h_l \in \SPQ$ of
  $HP = (t-h_l) P'$ according to
  Lemma~\ref{lem:null-line-remainder-polynomial}.

  Item~1 implies that the Clifford left translations $\eta\colon [x] \mapsto
  [(z-h_l)x]$ and $\overline{\eta}\colon [x] \mapsto [\overline{z}-h_l]$ are
  non-singular. By the considerations at the beginning of
  Section~\ref{sec:geometry-of-algorithm}, $[P'(z)]$ and $[P'(\overline{z})]$
  are the images of $[H(z)P(z)]$ and $[H(\overline{z})P(\overline{z})]$ under
  the inverse Clifford left translations $\eta^{-1}$ and
  $\overline{\eta}^{-1}$, respectively.

  From Item~3 we infer that the left rulings through $[H(z)P(z)]$ and
  $[H(\overline{z})P(\overline{z})]$ are different and $[P'(z)] \vee
  [P'(\overline{z})]$ is either a right ruling of $\N$ or a non-null line.
  Either way, $\rem(P',H\Cj{H})$ yields a right factor $t - h_r \in \SPQ$ of $P'
  = P'' (t-h_r)$. We define $T_1 \coloneqq H\Cj{H} \in \R[t]$ and obtain
  \begin{align*}
    T_1P = (t-\Cj{h})(t-h)P = (t-\Cj{h})(t-h_l)P''(t-h_r)
  \end{align*}
  where $\deg(P'') = \deg(P)-1$. We continue by running
  Algorithm~\ref{alg:non-generic-algorithm} with $P''$ and repeat the procedure
  above whenever the algorithm stops. Finally, we define $T \coloneqq T_1 \cdots
  T_m$ as the product of appearing real polynomials.

  It remains to justify the possibility to choose $H$ such that the conditions
  in Items~1--3 hold.

  The a priori degree of freedom for the choice of $H$ is four, namely the four
  real coefficients of $h$. We will show that each of the conditions in
  Items~1--3 accounts for the loss of at most two or three degrees of freedom.

  Item~1: $H\Cj{H} \in \R[t]$ is an irreducible, quadratic polynomial. The
  number of irreducible, quadratic factors of $P\Cj{P}$ is finite. Two different
  irreducible, quadratic real polynomials do not have any common split
  quaternion zeros (c.\,f. Lemma~\ref{lem:factor-real-polynomial}). Hence, our
  choice of $H$ is restricted by avoiding finitly many zero-sets of quadratic
  split-quaternions polynomials which are two-parametric, again by
  Lemma~\ref{lem:factor-real-polynomial}.

  Item~2: Consider the map
  \begin{align*}
    \varphi \colon \SPQ \setminus \{ g \in \SPQ \colon g P(t_1) = 0 \} \to \N \colon x \mapsto [x P(t_1)].
  \end{align*}
  Its image is the left ruling of $\N$ through $[P(t_1)]$ which also contains
  $[P(t_2)]$. By Theorem~\ref{th:affine-two-plane-of-zeros}, the pre-image of
  $[P(t_2)]$ is a one-parametric union of two-dimensional affine planes, each of
  them corresponding to a representative of $[P(t_2)]$. This is even true for
  $t_2 \in \C \setminus \R$ as the subspace of real points in an affine plane of
  complex dimension two is at most of real dimension two. We choose $H$ such
  that the split quaternion $\Cj{H(t_2)} H(t_1) \in \SPQ$ is not contained in
  the union of these planes. This is possible because $\Cj{H(t_2)} H(t_1)$ has
  real split quaternion coefficients
  \begin{align*}
    \Cj{H(t_2)} H(t_1) = (t_2 - \Cj{h})(t_1 - h) = \underbrace{t_1t_2 + h\Cj{h}}_{\in \R} - \underbrace{(t_1+t_2)}_{\in \R} h
  \end{align*}
  and, for varying coefficients of $h$, parametrizes a real four-parametric set
  in the vector space of real split quaternions. By the choice of $H$, we have
  $[P(t_2)] \neq [\Cj{H(t_2)} H(t_1) P(t_1)]$ and therefore
  \begin{align*}
    [H(t_2) P(t_2)] \neq [H(t_2)\Cj{H(t_2)} H(t_1) P(t_1)] = [H(t_1) P(t_1)].
  \end{align*}

  Item~3: By Item~1, we have $[P(z)]$, $[P(\overline{z})] \notin \N$. The span
  of $[H(z)]$ and $[H(\overline{z})]$ is a straight line $[H(z)] \vee
  [H(\overline{z})]$ which is no ruling of $\N$ because $[H(\infty)] = [1]
  \notin \N$. By Remark~\ref{rmk:clifford-translation}, the point $[H(z)P(z)]$
  is contained in the right ruling through $[H(z)]$ and
  $[H(\overline{z})P(\overline{z})]$ is contained in the right ruling through
  $[H(\overline{z})]$. These two rulings are different and hence $[H(z)P(z)]
  \neq [H(\overline{z})P(\overline{z})]$. It only remains to ensure that
  $[H(z)P(z)] \vee [H(\overline{z})P(\overline{z})]$ is no left ruling of $\N$,
  i.\,e.
  \begin{align*}
    H(z)P(z)\Cj{(H(\overline{z})P(\overline{z}))} =
    H(z)P(z)\Cj{P(\overline{z})}\Cj{H(\overline{z})}\neq 0.
  \end{align*}
  A direct computation shows that $P(z)\Cj{P(\overline{z})}$ and
  $H(z)P(z)\Cj{P(\overline{z})}\Cj{H(\overline{z})}$ are of the form
  \begin{align*}
    P(z)\Cj{P(\overline{z})} &= P(z)\overline{\Cj{P(z)}} = p_0 + \underbrace{(p_1 \qi + p_2 \qj + p_3 \qk)}_{\eqqcolon p} \ci \neq 0, \\
    H(z)P(z)\Cj{P(\overline{z})}\Cj{H(\overline{z})} &= H(z)P(z)\overline{\Cj{(H(z)P(z))}} = q_0 + (q_1 \qi + q_2 \qj + q_3 \qk) \ci
  \end{align*}
  where $p_\ell$, $q_\ell \in \R$ for $\ell = 0,\ldots,3$ and $\ci \in \C$ is
  the imaginary unit. If $p = 0$, then
  \begin{align*}
    H(z)P(z)\Cj{P(\overline{z})}\Cj{H(\overline{z})} = H(z)p_0\Cj{H(\overline{z})} =
    p_0H(z)\Cj{H(\overline{z})} \neq 0
  \end{align*}
  since $[H(z)] \vee [H(\overline{z})]$ is no ruling of $\N$. Else, another
  straightforward computation, which uses the condition $H(z)\Cj{H(z)} = 0$ for
  simplification, shows that
  \begin{align*}
    q_0 = 2 z_1 \langle h, p \rangle + 2 p_0 + z_1^2,
  \end{align*}
  where $z_1 \in \R$ is the imaginary part of the complex number $z \in \C$. If
  $q_0 = 0$, we replace $h$ by another element in the two-parametric set of
  zeros of $H\Cj{H}$. This does not change $z$, $z_1$ and $p$. The equation
  $H\Cj{H} = 0$ defines a quadratic surface in the three-dimensional affine
  space (c.\,f. Remark~\ref{rmk:factor-real-polynomial}), given by an equation
  of the shape $ah_0 + b = 0$ with $a$, $b \in \R$. Because of $p \neq 0$, it is
  different from the affine space $\{ x \in \SPQ \colon 2 z_1 \langle x, p
  \rangle + 2 p_0 + z_1^2 = 0\}$. Hence, the latter three-space intersects the
  quadric $H\Cj{H} = 0$ (at most) in a conic section. This shows that there
  exists a replacement for $h$ such that $q_0 \neq 0$ and thus
  $H(z)P(z)\Cj{P(\overline{z})}\Cj{H(\overline{z})} \neq 0$. In addition, we can
  guarantee that the conditions of Items~1 and 2 still hold if we choose our new
  $h$ near the initial one due to continuity of the polynomial inequalities in
  Item~1 and~2.
\end{proof}

\begin{example}\label{ex:algorithm-stops-2}
  Consider the polynomial $P_3 = t^3 - \qi t^2 + \qk t - \qj$ from
  Example~\ref{ex:algorithm-stops-1}. It does not admit a factorization.
  Algorithm~\ref{alg:non-generic-algorithm} stops after one iteration for the
  triple $(t^2+1,t^2-1,t^2+1)$ and yields only one linear right factor $P_3 =
  (t^2+\qk)(t-\qi)$. We define $H = t-2\qi \in \SPQ$, $T = H\Cj{H} = t^2+4 \in
  \R[t]$ and obtain the factorization
  \begin{multline*}
    T P_3 = (t+2\qi) (t-2\qi) (t^2+\qk) (t-\qi) \\=
    (t+2\qi) (t-\tfrac{3}{4}\qi+\tfrac{5}{4}\qj) (t+\tfrac{61}{60}\qi-\tfrac{11}{60}\qj) (t-\tfrac{34}{15}\qi-\tfrac{16}{15}\qj) (t-\qi).
  \end{multline*}
\end{example}

From a geometric point of view, a split quaternion polynomial $P \in \SPQ[t]$
does neither have a right factor nor a left factor precisely if each quadratic,
real factor $N = (t-t_1)(t-t_2)$ of $P\Cj{P}$ yields a remainder polynomial $R =
\rem(P,N)$ that parametrizes only a point of $\N$, i.e. $[P(t_1)] = [P(t_2)] \in
\N$. Multiplication by the polynomials $H = t-h \in \SPQ$ in the proof of
Theorem~\ref{th:multiplication-trick} separates these ``two'' points and in
addition guarantees that the line $[H(z)P(z)]
\vee[H(\overline{z})P(\overline{z})]$ does not degenerate to only a point when
dividing off the left factor of $HP$ obtained by $N$. Hence, at least one
further iteration of Algorithm~\ref{alg:non-generic-algorithm} is successful
for~$HP$.

Pseudo-code for factorization via multiplying with a real polynomial is provided
in Algorithm~\ref{alg:all-factor}. Lines~1 to \ref{li:xxx} are essentially
identical to Algorithm~\ref{alg:non-generic-algorithm}, the novel part starts
with Line~\ref{li:yyy}.

\begin{algorithm}
  \caption{$\mathtt{AllFactor}$ (Factorization via Multiplication with Real
    Polynomial)}
  \label{alg:all-factor}
  \begin{algorithmic}[1]
    \Require $(P,F,T)$, where $P \in \SPQ[t]$ is a reduced, monic, split quaternion polynomial of
    degree $n \geq 1$, $T \in \R[t]$ is a real polynomial (initially $T=1$) and
    $F = (F_1,\ldots,F_n)$ is a tuple of quadratic real polynomials such that $P\Cj{P} = F_1 \cdots F_n$.
    \Ensure A pair $(L,T)$, where $L = (L_1, \ldots, L_m)$ is a tuple of linear split quaternion
    polynomials and $T$ is a real polynomial such that $TP = L_1 \cdots L_m$ with $m
    \geq n$.
    \Statex
    \If{$\deg(P) = 0$}
    \State \Return ()
    \EndIf
    \State $N \leftarrow F_n$, $F \leftarrow (F_1,\ldots,F_{n-1})$
    \State $R = r_1t + t_0 \leftarrow \rem(P,N)$
    \If{$R\Cj{R} \neq 0$}
    \State $h \leftarrow$ unique right zero of $R$
    \State $P \leftarrow \lquo(P,t-h)$
    \State \Return $(\mathtt{ALLFactor}(P,F,T) \oplus (t-h),T)$
    \ElsIf{$r_1 \Cj{r}_0 \neq 0$} \Comment $[r_0] \vee [r_1]$ does not lie on a left ruling
    \State $h \leftarrow$ unique common right zero of $N$ and $R$
    \Comment{compute via Remark~\ref{rem:null-line-remainder-polynomial}}
    \State $P \leftarrow \lquo(P,t-h)$
    \State \Return $(\mathtt{AllFactor}(P,F,T) \oplus (t-h),T)$\label{li:xxx}
    \Else \label{li:yyy}
    \State Choose $H = t-h$ according to Item~1--3 in the proof of Theorem~\ref{th:multiplication-trick}.
    \State $R_l \leftarrow \rem(HP,N)$
    \State $h_l \leftarrow$ unique common left zero of $N$ and $R_l$
    \Comment{compute via Remark~\ref{rem:null-line-remainder-polynomial}}
    \State $P' \leftarrow \rquo(HP,t-h_l)$
    \State $R_r \leftarrow \rem(P',H\Cj{H})$
    \State $h_r \leftarrow$ unique common right zero of $H\Cj{H}$ and $R_r$
    \Comment{compute via Remark~\ref{rem:null-line-remainder-polynomial}}
    \State $P'' \leftarrow \lquo(P',t-h_r)$
    \State $T \leftarrow T H\Cj{H}$
    \State \Return $((t-\Cj{h},t-h_l) \oplus \mathtt{AllFactor}(P'',F,T) \oplus (t-h_r),T)$
    \EndIf
  \end{algorithmic}	 	
\end{algorithm}

\subsection{Factorization of Euclidean Motions}
\label{sec:euclidean-factorization}

The group of Euclidean motions in three-dimensional Euclidean space can be
parametrized by a certain subset of (Hamiltonian) dual quaternions. Hamiltonian
quaternions $\H$ are defined in the same manner as split quaternions with
only some changes of sign in the generating relations:
\begin{align*}
  \qi^2 = \qj^2 = \qk^2 = \qi \qj \qk = -1.
\end{align*}
Unlike the split quaternions, they define a skew field. The non-commutative ring
of dual quaternions is defined as $\DH \coloneqq \H[\eps] / \langle \eps^2
\rangle$, the quotient of the polynomial ring $\H[\eps]$ and the ideal generated
by $\eps^2$ where the indeterminate $\eps$ with the property $\eps^2 = 0$
commutes with the complex units $\qi$, $\qj$ and $\qk$. The norm of a dual
quaternion $p + \eps d \in \DH$ with $p$, $d \in \H$ equals $(p + \eps d)(\Cj{p}
+ \eps \Cj{d}) = p\Cj{p} + \eps (p\Cj{d} + d\Cj{p})$. It is an element of
$\R/\langle \eps^2 \rangle$ -- the ring of dual numbers. The group of Euclidean
motions $\SE[3]$ is isomorphic to the quadric defined by the equation $p\Cj{d} +
d\Cj{p} = 0$ minus the subspace $\{ p + \eps d \in \DH \colon p = 0 \}$ in the
seven-dimensional projective space of dual quaternions $\P(\DH)$. A rational
motion in the Euclidean three-space can be represented by a dual quaternion
polynomial $P + \eps D \in \DH[t]$ with $P$, $D \in \H[t]$ that fulfills the
Study condition $P\Cj{D} + D\Cj{P} = 0$ as well as $P \neq 0$. We call such
polynomials \emph{motion polynomials}. Their norm polynomials equal $(P + \eps
D)(\Cj{P} + \eps \Cj{D}) = P\Cj{P}$ and are real. For more details we refer
to~\cite{hegedus13}.

Some of our ideas and results can be applied for the factorization of motion
polynomials as well and shed new light on the already quite sophisticated
Euclidean theory \cite{li15b}. Consider a reduced motion polynomial $P + \eps D
\in \DH[t]$ and its norm polynomial $(P + \eps D)(\Cj{P} + \eps \Cj{D}) =
P\Cj{P} + \eps (P\Cj{D} + D\Cj{P}) = P\Cj{P}$. Let $N$ be a quadratic factor of
$P\Cj{P}$. Generically, $R = \rem(P + \eps D,N)$ has a unique left/right zero
which yields a linear left/right factor of $P + \eps D$. If $P \in \H[t]$ has a
quadratic, real, irreducible factor $M \in \R[t]$ we can choose $N = M$. Then
$R$ is of the form $R = \eps R'$ for some $R' \in \H[t]$ and has no unique zero
anymore. This issue has already been addressed in \cite{li15b} and could be
overcome by computing a common quaternion left/right zero of $N$ and $D$ which
again yields a linear left/right factor of $P + \eps D$. In the following we
will consider the situation above from a more geometric point of view and
compute a common dual quaternion zero of $N$ and $R$. Similar to
Lemma~\ref{lem:factor-real-polynomial}, the set of quaternion zeros of the
irreducible, real polynomial $N$ is a Euclidean sphere in some three-dimensional
affine space \cite{huang02}.

Let $p \in \H$ be a zero of $N$. A straightforward computation shows that $p +
\eps d \in \DH$ is a zero of $N$ precisely if $d \in \H$ fulfills two linear
equations induced by the coefficients of $N$. Hence, the set of dual quaternion
zeros of $N$ can be viewed as a four-dimensional ``cylinder'' $C$ in the
eight-dimensional vector space~$\DH$.

If $\deg(R) = \deg(\eps R') = 1$, then $R' \in \H[t]$ has a unique zero in $\H$,
let's say $r_1$. Moreover, $r_1 + \eps r_2 \in \DH$ is a zero of $R = \eps R'$
for any $r_2 \in \H$. In this case, the set of dual quaternion zeros of $R$ is a
four-dimensional affine subspace $A$ of~$\DH$.

We are interested in motion polynomial factorizations. Hence, the Study
condition $(t - r_1 - \eps r_2)(t - \Cj{r_1} - \eps \Cj{r_2}) \in \R[t]$ should
hold. It induces two linear constraints on $r_2$, namely $r_1\Cj{r_2} +
r_2\Cj{r_1} = 0$ and $r_2 + \Cj{r_2} = 0$. Hence, $\eps r_2$ lies in a
two-dimensional affine subspace $B$ of~$\eps\H \subset \DH$.

The intersection of $A$ and $C$ yields all common dual quaternion left/right
zeros of $N$ and $R$ and thus linear left/right factors of $P + \eps D$.
Intersecting with $B$ imposes the motion polynomial condition. If this
intersection is not empty (which is not guaranteed) we might even find new
factorizations for certain dual quaternion polynomials.
 
\begin{example}\label{ex:factor-darboux-motion}
  Consider the polynomial $P_4 = (t^2+1)(t-\qk) - \eps(\qi t^2 + (\qi + \qj) t +
  \qj)$, a special case of \cite[Example~5]{li15b}. The norm polynomial reads as
  $P_4\Cj{P_4} = (t^2+1)^3$. We can only choose the factor $N = t^2+1$ and
  compute $R = \rem(P_4,N) = -\eps((\qi + \qj) t - \qi + \qj)$. The set $A \cap
  B$ of its left zeros fulfilling the Study condition is $\{ -\qk + \eps
  (\lambda \qi + \mu \qj) \colon \lambda, \mu \in \R \}$. The set $C$ of left
  zeros of $N$ equals
  \begin{align*}
    C = 
    \{\alpha \qi + \beta \qj + \gamma \qk + \eps (\lambda \qi +
    \mu \qj) \colon \alpha, \beta, \gamma, \lambda, \mu \in \R \text{ and }
    \alpha^2+\beta^2+\gamma^2=1\}.
  \end{align*}
  Hence, $H = t + \qk - \eps(\lambda \qi + \mu \qj) \in \DH[t]$ is a left motion
  polynomial factor of $P_4$ for all $\lambda$, $\mu \in \R$. The right quotient
  $\rquo(P_4,H) = t^2 - 2\qk t - 1 + \eps ((\lambda \qi + \mu \qj - \qi) t - \mu
  \qi + \lambda \qj - \qi)$ can be factored by the generic factorization
  algorithm and we obtain $P_4 = H_1 H_2 H_3$ with
  \begin{align*}
    H_1 &= t + \qk - \eps \left(\lambda \qi + \mu \qj \right), \\
    H_2 &= t - \qk + \eps \bigl(( \lambda - \tfrac{1}{2} ) \qi + ( \mu + \tfrac{1}{2} ) \qj \bigr), \\
    H_3 &= t - \qk - \eps (\tfrac{1}{2} \qi + \tfrac{1}{2} \qj ),
  \end{align*}
  for all $\lambda$, $\mu \in \R$.
\end{example}

\begin{rmk}
  Our approach yields a systematic way to compute new factorizations for certain
  motion polynomials. Indeed, the leftmost linear factor in all factorizations
  of $P_4$ from Example~\ref{ex:factor-darboux-motion} in \cite{li15b} are
  quaternion factors while ours are proper dual quaternion factors.
\end{rmk}

\section{Future Research}

We have presented a new algorithm for factorization of split quaternion
polynomials. Its failure is equivalent to the non-factorizability of the
polynomial into linear factors. By investigation on the algorithm's geometric
behavior, we presented a procedure in order to factor all motions in the
hyperbolic plane represented by split quaternion polynomials.

Unfortunately, a test for factorizability of split quaternion polynomials using
our characterization is of factorial complexity in the polynomial degree as we
might need to run Algorithm~\ref{alg:non-generic-algorithm} for all possible
tuples of quadratic, real factors of the norm polynomial. An a priori
characterization as well as a numerically stable and robust version of
Algorithm~\ref{alg:non-generic-algorithm} is on our research agenda.

Our approach seems promising to find all possible factorizations into linear
factors of dual quaternion polynomials as well, a question which is also still
open yet. Further plans for future research include extensions of polynomial
factorization in the more general setup of Geometric Algebra. It admits a
conjugation, an important ingredient for the factorization algorithm but the
corresponding norm is not multiplicative which might cause problems. One
particularly interesting example is conformal geometric algebra for which
promising preliminary results do exist. The description in
\cite{dorst11b,dorst16} of the exponential in conformal geometric algebra allows
for a kinematic interpretation of factorization, similar to hyperbolic
kinematics (Section~\ref{sec:hyperbolic-motions-and-polynomials}).

\section*{Acknowledgment}

Daniel F. Scharler was supported by the Austrian Science Fund (FWF): P~31061
(The Algebra of Motions in 3-Space).

\bibliographystyle{elsarticle-harv}

\end{document}